% To complete a citation: Ctrl + space
% "Residue at s = 1" -> \underset{s=1}{Res}
% "Evaluation at some value => \bigr|, \Bigr| and \Biggr|   
% Citation with in-numbering: \cite[Section 9]{abc}  => [abc, Section 9]
% Font size in the document: blahblah... {\small ... },  {\Small ...}, {\tiny ...}, {\Tiny ...}
\documentclass[11pt]{amsart}
\usepackage{amssymb, latexsym, graphicx, mathrsfs, enumerate, xcolor}
\usepackage{array,tabularx}

\setlength{\textwidth}{460pt} \setlength{\hoffset}{-45pt}

\usepackage{manfnt} %called for \newcommand*\cube{\mbox{\mancube}}
\usepackage{amssymb,amsmath}
\vfuzz2pt % Don't report small over-full v-boxes
\makeindex

% THEOREM Environments ------------------------------------
%\newtheorem{thm}{Theorem}[subsection]
\newtheorem{thm}{Theorem}[section]

\newtheorem{lem}[thm]{Lemma}
\newtheorem{prop}[thm]{Proposition}

\theoremstyle{definition}

\theoremstyle{remark}
\newtheorem{rem}[thm]{Remark}
\newtheorem{conj}[thm]{Conjecture}
%\numberwithin{equation}{subsection}
\numberwithin{equation}{section}

%%%%%%%%201702XX

\newcommand*\Q{\mathbb{Q}}
\newcommand*\Z{\mathbb{Z}}

\makeatletter
\newcommand{\alignfootnote}[1]{%
	\ifmeasuring@
	\else
	\footnote{#1}%
	\fi
}
\makeatother
% MATH ----------------------------------------------------
% -----------------------------------------------------------
\begin{document}
\sloppy %Prevents long lines to go into the right margin
\title{Analytic ranks of elliptic curves over number fields}

\author[Peter Cho]{Peter J. Cho$^{\dagger}$}
\thanks{$^{\dagger}$ This work was supported by the National Research Foundation of Korea(NRF) grant funded by the Korea Government(MSIT)(2019R1F1A1062599) and the Basic  Science Research Program(2020R1A4A1016649)}
\address{Department of Mathematical Sciences, Ulsan National Institute of Science and Technology, Ulsan, Korea}
\email{petercho@unist.ac.kr}

\keywords{Elliptic curve, analytic rank, cyclic extension}
\begin{abstract} Let $E$ be an elliptic curve over $\mathbb{Q}$. Then, we show that the average  analytic rank of $E$ over cyclic extensions of degree $l$ over $\Q$ with $l$  a prime not equal to $2$, is at most $2+r_\Q(E)$, where $r_\Q(E)$ is the analytic rank of the elliptic curve $E$ over $\Q$. This bound is independent of the degree $l$  Also, we also obtain some average analytic rank results over $S_d$-fields.
\end{abstract}
\subjclass[2010]{Primary 11M06, 11M26, Secondary11M50}

\maketitle

\section{Introduction}
Let $E$ be an elliptic curve defined over $\mathbb{Q}$ with conductor $Q_E$. For a number field $F$, let $E(F)$ be the group of  $F$-rational points of the elliptic curve $E$. Let $L_F(s,E)$ be the normalized $L$-function of $E$ over the field $F$ so that its central point is $\frac 12$. We omit the subscript $F$  from $L_F(s,E)$ when $F$ is the field of rational numbers. We are interested in the behavior of the analytic ranks of the $L$-functions $L_F(s,E)$ when $F$ is a cyclic extension of prime degree $l$ over $\Q$. For a prime $l\geq 2$, we denote the family of all cyclic extensions $F$ of degree $l$ over $\Q$ by $C_l$. Then, for  a number field $F$ in $C_l$ we have
\begin{align}\label{decomp}
L_F(s,E)=L(s,E) \prod_{\chi} L(s, E \times \chi),
\end{align}
where $\chi$ runs over the $(l-1)$ primitive $l$-th order Dirichlet characters corresponding to the field $F$. 

For quadratic fields $F$, the average analytic rank of $L(s,E\times \chi)$ is expected to be $\frac{1}{2}$ regardless of the analytic rank $r_\Q(E)$ of $L(s,E)$ by Goldfeld's conjecture. So we have the following statement equivalent to Goldfeld's conjecture \cite{Go}.

\begin{conj}[Goldfeld's conjecture] Let $E$ be an elliptic curve over $\Q$. Then, the average analytic rank of $E$ over quadratic fields $F$ is $\frac{1}{2}+r_\Q(E)$. 
\end{conj}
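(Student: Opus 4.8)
The plan is to show that the displayed statement is nothing more than a repackaging of Goldfeld's original conjecture on quadratic twists, obtained by feeding the factorization \eqref{decomp} (specialized to $l=2$) into the classical dictionary between quadratic fields and fundamental discriminants.

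First I would write out the $l=2$ case of \eqref{decomp}. A quadratic field $F=\Q(\sqrt d)$, with $d$ a fundamental discriminant, corresponds to a unique primitive quadratic Dirichlet character $\chi_d$ of conductor $|d|$, and \eqref{decomp} becomes $L_F(s,E)=L(s,E)\,L(s,E\times\chi_d)$. The key observation is that, with the normalization fixing the central point at $\tfrac 12$, the twisted $L$-function $L(s,E\times\chi_d)$ is exactly the $L$-function $L(s,E^{(d)})$ of the quadratic twist $E^{(d)}$ of $E$ by $d$: twisting the automorphic representation attached to $E$ by $\chi_d$ yields the representation attached to $E^{(d)}$, and the conductors, gamma factors and signs in the functional equation match. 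Taking orders of vanishing at the center then gives
\begin{align}\label{rankadd}
r_F(E)=r_\Q(E)+r_\Q(E^{(d)}),
\end{align}
where $r_F(E)=\operatorname{ord}_{s=1/2}L_F(s,E)$ and $r_\Q(E^{(d)})=\operatorname{ord}_{s=1/2}L(s,E^{(d)})$.

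Next I would match up the two families being averaged. Ordering quadratic fields $F/\Q$ by $|\operatorname{disc}(F)|$ is the same as ordering fundamental discriminants $d\neq 1$ by $|d|$, and $F\leftrightarrow d$ is a bijection; summing \eqref{rankadd} over all such $F$ with $|\operatorname{disc}(F)|\le X$, dividing by the number of terms, and letting $X\to\infty$ yields
\begin{align}\label{avgadd}
\operatorname*{Avg}_{F}\,r_F(E)=r_\Q(E)+\operatorname*{Avg}_{d}\,r_\Q(E^{(d)}),
\end{align}
where both averages are understood in the same sense (a genuine limit, a $\limsup$, or a density-weighted mean — \eqref{avgadd} is a formal consequence of \eqref{rankadd} once the bijection is in place). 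Goldfeld's conjecture, in its original formulation, is precisely the assertion that $\operatorname*{Avg}_d\,r_\Q(E^{(d)})=\tfrac 12$, equivalently that the twists $E^{(d)}$ of analytic rank $0$ and those of analytic rank $1$ each make up density $\tfrac 12$ among fundamental discriminants. Substituting this into \eqref{avgadd} gives the displayed statement, and since every step is reversible the two are equivalent.

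I do not expect a genuine obstacle at this level: the only points needing care are (i) the bookkeeping showing that twisting by $\chi_d$ returns $E^{(d)}$ together with the correct conductor and functional equation — so that the normalized central values really line up and \eqref{rankadd} holds exactly, with no stray Euler factors at primes dividing both $d$ and $Q_E$ — and (ii) checking that the parametrization of quadratic fields by fundamental discriminants is compatible with whichever ordering is used to define the average, including the harmless distinction between real ($d>0$) and imaginary ($d<0$) quadratic fields. The substantive content, namely the value $\tfrac 12$, is of course exactly what remains open; "proving" the displayed statement is neither more nor less than proving Goldfeld's conjecture itself.
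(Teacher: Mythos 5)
Your reduction is exactly the paper's own reasoning: the statement is presented there not as a theorem but as a reformulation of Goldfeld's conjecture, obtained from the $l=2$ case of \eqref{decomp} together with the identification $L(s,E\times\chi_d)=L(s,E^{(d)})$ and the expected average twist rank $\tfrac12$. You correctly note that the substantive content (the value $\tfrac12$) remains open, so nothing more than this equivalence can be, or is, proved in the paper.
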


Goldfeld's conjecture says that a half of the twisted $L$-functions $L(s, E\times \chi)$ do not vanish at the central point and the other half of them vanish to order $1$ at the central point. However, the story seems different for cyclic extensions of prime degree $l\geq 3$. David, Fearnley and Kisilevsky \cite{DFK} conjectured that for a fixed elliptic curve $E$ and a fixed prime $l\geq 7$, there are only finitely many primitive Dirichlet characters $\chi$ of order $l$ for which $L(s,E \times \chi)$ vanishes at the central point $s=1/2$. Also, they conjectured that only a small number of twisted $L$-functions $L(s,E\times \chi)$ vanish for $l=3$ and $5$. As a direct consequence of the conjecture, we have the following conjecture.

\begin{conj} \label{conj2} Let $l\geq 3$ be a prime and $E$ an elliptic curve over $\Q$. Then, the average analytic rank of $E(F)$ over the family $C_l$ is $r_\Q(E)$. 
\end{conj}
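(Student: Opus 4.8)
\medskip

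\noindent\textbf{Proof proposal.}
The plan is to turn the statement, via the factorisation \eqref{decomp}, into a non-vanishing assertion about the order-$l$ twists, and then to feed in the conjecture of David, Fearnley and Kisilevsky recalled above as the decisive input --- indeed the authors already observe that Conjecture~\ref{conj2} is a direct consequence of it. Order $C_l$ by conductor, writing $C_l(X)=\{F\in C_l:\mathfrak f_F\le X\}$; this is a finite set with $\#C_l(X)\to\infty$ (for instance the degree-$l$ subfield of $\Q(\zeta_p)$ for each prime $p\equiv1\ (\mathrm{mod}\ l)$ already gives infinitely many fields in $C_l$). A field $F\in C_l$ is cut out by the $l-1$ conjugate primitive order-$l$ characters $\chi,\chi^2,\dots,\chi^{l-1}$, all of conductor $\mathfrak q=\mathfrak f_F$, and since $L(s,E)$ and every $L(s,E\times\chi^{j})$ is entire of order one (a newform $L$-function, resp.\ the $L$-function of a Dirichlet-character twist of a newform), \eqref{decomp} gives
\[
r_F(E)=r_\Q(E)+\sum_{j=1}^{l-1}m(E\times\chi^{j}),\qquad m(\psi):=\operatorname{ord}_{s=1/2}L(s,\psi)<\infty .
\]
Hence the claim is equivalent to
\[
\frac{1}{\#C_l(X)}\sum_{F\in C_l(X)}\ \sum_{j=1}^{l-1}m(E\times\chi^{j})\ \longrightarrow\ 0\qquad(X\to\infty),
\]
i.e.\ to the total order of vanishing contributed by the order-$l$ twists of conductor $\le X$ being $o(\#C_l(X))$.

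For $l\ge7$ the conjecture above asserts that only finitely many primitive order-$l$ characters $\chi$ (over all conductors) have $L(1/2,E\times\chi)=0$; then the double sum has only finitely many nonzero terms, each a fixed finite integer, so it is $O(1)$ and the average tends to $0$, giving the exact value $r_\Q(E)$. For $l=3,5$ the prediction is instead that the number of such vanishing $\chi$ of conductor $\le X$ is of strictly smaller order than the number of order-$l$ characters of conductor $\le X$; combining this with the crude bound $m(E\times\chi)\ll\log(Q_E\mathfrak q_\chi^{2})$ from the explicit formula (or convexity) again makes the double sum $o(\#C_l(X))$, provided the saving in the count of exceptional characters comfortably absorbs the logarithmic loss --- which is where one wants the quantitative form of the conjecture rather than its bare statement.

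The genuine obstacle is that there seems to be no way to remove this hypothesis: establishing the \emph{exact} value $r_\Q(E)$ amounts to showing that $100\%$ of the twists $L(s,E\times\chi)$ are nonzero at the central point, which is precisely the open problem. Unconditionally one is confined to the explicit-formula / one-level-density method: choose an even $\phi\ge0$ with $\phi(0)>0$ and $\widehat\phi$ supported in $(-\delta,\delta)$, bound $m(E\times\chi)\le\phi(0)^{-1}\sum_{\gamma}\phi\!\big(\tfrac{\gamma}{2\pi}\log C(E\times\chi)\big)$ with $C(E\times\chi)\asymp Q_E\mathfrak q_\chi^{2}$, and average the resulting sum over primes against the order-$l$ characters of conductor $\le X$; the relevant orthogonality is that $\sum_{j=1}^{l-1}\chi^{j}(n)$ equals $l-1$ or $-1$ according as $n$ is or is not an $l$-th power residue mod $\mathfrak q$, and the scarce resource is the admissible support $\delta$ (together with how cleverly one bundles the $l-1$ conjugate twists, or equivalently treats $L_F(s,E)/L(s,E)$ directly, which is presumably how the $l$-independence of the paper's bound arises). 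Because $\{L(s,E\times\chi)\}$ is a \emph{unitary} family --- the $L$-functions are not self-dual, $\overline{L(s,E\times\chi)}=L(s,E\times\overline\chi)$ with $\overline\chi\ne\chi$ --- the one-level density main term is $\int_{\mathbb{R}}\phi(x)\,dx=\widehat\phi(0)$, and $\inf_{\phi}\widehat\phi(0)/\phi(0)>0$ for any fixed $\delta$; so this route can only bound the averaged double sum by a \emph{positive} constant and never reaches $0$. That positive constant is what the remainder of the paper controls to get the bound $2+r_\Q(E)$, and shrinking it all the way to $0$ for every prime $l\ge3$ is exactly Conjecture~\ref{conj2}, which I do not see how to prove with present methods.
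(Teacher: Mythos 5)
Your reduction is essentially the paper's own stance: the statement is a conjecture, and the paper derives it only as a ``direct consequence'' of the David--Fearnley--Kisilevsky conjecture via the factorisation \eqref{decomp}, exactly as you do, offering no unconditional proof (the unconditional one-level-density machinery in the paper yields only the upper bound $2+r_\Q(E)$ of Theorem \ref{main}, as you correctly explain). Your additional caveat that for $l=3,5$ the bare DFK prediction must be taken in a quantitative form (density of vanishing twists small enough to absorb the $\log$-size bound on the order of vanishing) is a fair refinement of the paper's one-line deduction, not a departure from it.
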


We can relate this conjecture with Diophantine Stability introduced by Mazur and Rubin \cite{MR} recently. Let $K$ be a number field. Suppose $V$ is an irreducible algebraic variety over $K$. If $L$ is a field containing $K$, we say that $V$ is diophantine-stable for $L/K$ if $V(L)=V(K)$. For a given elliptic curve  $E$ over $\Q$, if $r_F(E)=r_\Q(E)$ for a number field $F$, then under the Birch and Swinnerton-Dyer conjecture, the algebraic rank $r_F^{\text{alg}}(E)$ of $E(F)$ is equal to the algebraic rank $r_\Q^{\text{alg}}(E)$ of $E(\Q)$. By Merel's uniform bound \cite[Theorem 7.5.1]{Sil} on the size of $E(F)_{\text{tor}}$, we can see  that there are only finitely many number fields $F$ of degree $l$ for which $E(F)_{\text{tor}} > E(\Q)_{\text{tor}}$. Even if $r_F^{\text{alg}}(E)=r_\mathbb{Q}^{\text{alg}}(E)$ and $E(F)_{\text{tor}} = E(\Q)_{\text{tor}}$, there could be a $F$-rational point not belonging to $E(\mathbb{Q})$ unfortunately. \footnote{For example, consider an elliptic curve $E$ which is given by the Weierstrass equation $y^2=x^3+9$. Then $E(\Q)\cong \Z \times \Z/3\Z$. Let $K$ be the field by adjoining a root $\alpha$ of $x^4+8x^3-72x+72$ to $\Q$. Then $E(K)\cong \Z \times \Z/3\Z$ and $E(K)$ contains a new point $P=(\alpha, -\frac{1}{2}\alpha^3-3\alpha^2+9)$ of infinite order such that $2P=(-2,1)$. } However, these results still give strong conjectural evidence that an elliptic curve $E$ over $\mathbb{Q}$ is diophantine-stable for $L/\mathbb{Q}$ for most cyclic fields $L$ of prime degree $l \geq 3$.

%Hence, if we show that the average of the analytic ranks $r_F(E)$ over cyclic extensions of degree $l$ is $r_\Q(E)$, then, under BSD conjecture, the diophatine stability of an elliptic curve over cyclic extension of  degree $l$ is true with probability $1$.
%
%
%\begin{conj}[Goldfeld's conjecture for cyclic extensions of prime degree $l\geq 3$] \label{Goldfeld2}
% Let $E$ be an elliptic curve over $\Q$, let $l$ a prime $\geq3$. 
%\begin{itemize}
%\item[(1)] The average of analytic ranks of $E$ over cyclic extensions of degree $l$ is $r_\Q(E)$.
%\item[(2)] The elliptic curve $E$ over $\Q$ is diophatine stable over cyclic extension of prime degree $l\geq 3$ with probability $1$.
%\end{itemize}
%\end{conj}

We can understand these two seemingly different phenomena through Katz and Sarnak's $n$-level density conjecture for families of $L$-functions. Their philosophy is that the distribution of low-lying zeros of $L$-functions in a natural family is governed by one of the five classical matrix groups $O, SO(even), SO(odd), USp,$ and $U$, which we call the symmetry type of the family. We refer to \cite{Ru} for the introduction of the conjecture.  

From a work of Rubinstein \cite{Ru}, when $\chi$ is quadratic, we can see that the symmetry type for the family of $L$-functions $L(s, E \times \chi)$ is $O$ and the average of analytic ranks $r_F(E)$ is at most $2.5=2+0.5$. Heath-Brown \cite{HB} lowered the bound to $1.5$. If Katz and Sarnak's one-level conjecture is true for a test function with arbitrarily large compact support, the average analytic rank would be $1/2$, which is Goldfeld's conjecture. 

%We will explain the detail in Section \ref{S_d}.

In \cite{CP2019}, the author and Park computed the one-level density for families of $L$-functions $L(s, \pi \times \chi)$  for a cuspidal representation $\pi$ of $GL_M(A_\mathbb{Q})$. From it, when $\chi$ is a primitive character of prime degree $l>2$, we can determine that the symmetry type for the family of $L(s, E \times \chi)$ is $U$. Under the one-level density conjecture for the symmetry type $U$, the average analytic rank becomes $r_\Q(E)$.  

In this article, we make partial progress toward Conjecture \ref{conj2}. Let $E$ be an elliptic curve defined over $\mathbb{Q}$, and $F$  a field in $C_l$. By $(\ref{decomp})$, we have
\begin{align*}
 r_F(E) = r_\mathbb{Q}(E) + \sum_{\chi} \text{ord}_{s=1/2}L(s, E \times \chi).
\end{align*}
Hence the average of $r_F(E)$ is given by
\begin{align*}
r_\mathbb{Q}(E) +  \mbox{the average of $\text{ord}_{s=1/2}L(s, E \times \chi)$},
\end{align*}
where the average is taken over some subfamily of $C_l$ which we describe now. 
We consider primitive characters with conductor $q_\chi$ coprime to $Q_E$. The condition $(q_\chi,Q_E)=1$ determines the (not analytic but ordinary) conductor $q(E \times \chi)$ of $L(s, E \times \chi)$ completely, which is $q_\chi^2Q_E$ by a work of Barthel and Ramakrishnan \cite{BR}. 

Define
\begin{align*}
{C}_{l, Q_E}= \{F  \in C_l | (q_F, Q_E)=1 \}
\end{align*}
where $q_F$ is the conductor of the field $F$.
Now let $\omega$ be a non-negative Schwartz class function. We define
\begin{align*}
 \frak{C}_{l,Q_E}(X)=\sum_{F \in C_{l, Q_E}} \omega \left( \frac{q_F}{X}\right).
\end{align*}
 
We show that the average analytic rank has a nice uniform upper bound independent of the degree $l$.
\begin{thm}\label{main} Assume GRH.\footnote{We need GRH for the following $L$-functions: $\zeta(s)$, Dirichlet $L$-functions with $\chi$ mod $2l$, Hecke $L$-functions over $K=\Q(\zeta_l)$ with characters of order $l$, $ \zeta_K(s)$ and $L(s, E\times \chi)$ for primitive Dirichlet characters of order $l$. } 
Let $E$ be an elliptic curve over $\mathbb{Q}$, $l$ be a prime $\geq 3$. Then, 
\begin{align*}
\lim_{X \rightarrow \infty}\frac{\sum_{F \in C_{l,Q_E}(X)}r_F(E)\omega \left(\frac{q_F}{X} \right)}{ \frak{C}_{l,Q_E}(X) } \leq 2 + r_\mathbb{Q}(E).
\end{align*}
\end{thm}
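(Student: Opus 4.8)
The plan is to bound the average of $r_F(E)-r_\Q(E)$, which by $(\ref{decomp})$ equals the average of $\mathrm{ord}_{s=1/2}\prod_\chi L(s,E\times\chi)$, the product running over the $l-1$ primitive order‑$l$ characters cutting out $F$. I would treat $\mathcal{M}_F(s):=\prod_\chi L(s,E\times\chi)$ as a single automorphic $L$‑function of degree $2(l-1)$: since $(q_F,Q_E)=1$, the conductor formula of Barthel--Ramakrishnan gives $q(E\times\chi)=q_\chi^2Q_E$, so $\mathcal{M}_F$ has conductor $\mathfrak{q}_F:=(q_F^2Q_E)^{l-1}$. Fix an even non‑negative Schwartz $\phi$ with $\phih$ supported in $[-\tfrac12,\tfrac12]$ and $\phi(0)>0$; a dilate of the Fej\'er kernel makes $\phih(0)/\phi(0)=2$. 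Under GRH for the $L(s,E\times\chi)$ every zero of $\mathcal{M}_F$ lies on the critical line, so positivity of $\phi$ gives
\[
\big(r_F(E)-r_\Q(E)\big)\,\phi(0)\ \le\ \sum_{\mathcal{M}_F(1/2+i\gamma)=0}\phi\!\left(\frac{\gamma\log\mathfrak{q}_F}{2\pi}\right).
\]

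Next I would apply the explicit formula to $\mathcal{M}_F$ with the zeros dilated by $\log\mathfrak{q}_F$. The archimedean side contributes $\phih(0)+o(1)$ (the $2(l-1)$ Gamma‑factors each cost only $O(1/\log\mathfrak{q}_F)$ after the dilation), and the $p^k$‑coefficient of the prime side, for $p\nmid q_FQ_E$, is $\Lambda_E(p^k)\sum_{\chi}\chi(p)^k$, where $\Lambda_E$ denotes the coefficients of $-L'/L(s,E)$ (the finitely many primes dividing $q_FQ_E$ contribute $O(l/\log\mathfrak q_F)$ and are negligible). Orthogonality gives $\sum_\chi\chi(p)^k=l\cdot\mathbf 1[\,\mathrm{Frob}_p^{\,k}=1\ \text{in}\ \mathrm{Gal}(F/\Q)\,]-1$, which is $l-1$ precisely when $p$ splits completely in $F$, or $p$ is inert and $l\mid k$, and $-1$ otherwise. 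So the prime side splits into: (i) the ``$-1$'' piece, which reassembles into $-\tfrac{2}{\log\mathfrak{q}_F}\sum_n\Lambda_E(n)n^{-1/2}\phih(\log n/\log\mathfrak{q}_F)$, i.e.\ the prime side of the explicit formula for $L(s,E)$ itself, dilated by $\log\mathfrak{q}_F\to\infty$ --- since only the $r_\Q(E)$ central zeros of $L(s,E)$ survive such a dilation (this uses only that $L(s,E)$ is entire of order one, not GRH for $L(s,E)$), this piece equals $r_\Q(E)\phi(0)+o(1)$; (ii) the $p$‑inert, $l\mid k$ terms, which involve only $k\ge l\ge3$ and are $O(l/\log\mathfrak{q}_F)$; and (iii) the main remaining piece
\[
\mathcal{B}_F\ :=\ \frac{2l}{\log\mathfrak{q}_F}\sum_{\substack{p\ \text{splits completely}\\ \text{in }F}}\ \sum_{k\ge 1}\frac{\Lambda_E(p^k)}{p^{k/2}}\,\phih\!\left(\frac{k\log p}{\log\mathfrak{q}_F}\right).
\]
Combining everything, $r_F(E)\,\phi(0)\le \phih(0)-\mathcal{B}_F+o(1)$, uniformly for $q_F\asymp X$.

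I would then average over $F\in C_{l,Q_E}$ against $\omega(q_F/X)$ and divide by $\frak{C}_{l,Q_E}(X)$. The key identity, valid for $p\nmid q_F$, is $\mathbf 1[p\ \text{splits completely in}\ F]=\tfrac1l\big(1+\sum_{\chi}\chi(p)\big)$, whence
\[
\frac{1}{\frak{C}_{l,Q_E}(X)}\sum_{F}\omega\!\left(\tfrac{q_F}{X}\right)\mathbf 1[p\ \text{splits completely in}\ F]\ =\ \frac1l+\frac{1}{l\,\frak{C}_{l,Q_E}(X)}\sum_{\chi}\omega\!\left(\tfrac{q_\chi}{X}\right)\chi(p),
\]
the latter sum over primitive order‑$l$ Dirichlet characters $\chi$ with $(q_\chi,Q_E)=1$. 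The $\tfrac1l$‑term turns the average of $\mathcal{B}_F$ back into $\tfrac{2}{\log\mathfrak q_F}\sum_n\Lambda_E(n)n^{-1/2}\phih(\log n/\log\mathfrak q_F)=-r_\Q(E)\phi(0)+o(1)$, and the remaining term is an error
\[
\mathcal{E}_X\ :=\ \frac{2l}{\log\mathfrak{q}_F\,\frak{C}_{l,Q_E}(X)}\sum_{\chi}\omega\!\left(\tfrac{q_\chi}{X}\right)\sum_{k\ge1}\sum_{p}\frac{\Lambda_E(p^k)\,\chi(p)^k}{p^{k/2}}\,\phih\!\left(\frac{k\log p}{\log\mathfrak{q}_F}\right).
\]
Granting $\mathcal{E}_X=o(1)$, we get $\big(\text{average of } r_F(E)\big)\,\phi(0)\le\phih(0)+r_\Q(E)\phi(0)+o(1)$, and $X\to\infty$ with $\phih(0)/\phi(0)=2$ yields the theorem.

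The heart of the matter --- and the step I expect to be the main obstacle --- is showing $\mathcal{E}_X\to0$. The terms with $k\ge2$ are controlled by Mertens‑type bounds (for non‑CM $E$ using the cuspidality of $\mathrm{sym}^2E$; the CM case is analogous). For $k=1$ one must prove $\sum_{\chi}\omega(q_\chi/X)\sum_{p}\Lambda_E(p)\chi(p)\,p^{-1/2}\phih(\log p/\log\mathfrak{q}_F)=o\big(\frak{C}_{l,Q_E}(X)\log\mathfrak{q}_F\big)$, and since the smoothing only truncates $p$ at $p\le X^{\,l-1}$ this cannot be estimated prime by prime: one needs cancellation simultaneously over the primes $p$ --- via GRH for each $L(s,E\times\chi)$ --- and over the characters $\chi$ --- via GRH for $\zeta(s)$, $\zeta_K(s)$ and the order‑$l$ Hecke $L$‑functions over $K=\Q(\zeta_l)$, together with Dirichlet $L$‑functions modulo $2l$. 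The latter inputs measure the quantitative equidistribution of $\mathrm{Frob}_p$ among the cyclic degree‑$l$ fields as $F$ varies, uniformly in $p$, through the Kummer‑theoretic parametrisation of order‑$l$ Dirichlet characters by elements (or ideals) of $\Q(\zeta_l)$; the available zero‑density information is precisely what limits the test function to support $[-\tfrac12,\tfrac12]$ and hence fixes the constant at $2$ --- independent of $l$, because after passing to $\mathcal{M}_F$ the main term is a single $\phih(0)$ rather than $(l-1)\phih(0)$.
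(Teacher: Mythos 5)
Your route is genuinely different from the paper's: you bundle the $l-1$ twists into the single degree-$2(l-1)$ $L$-function $\mathcal{M}_F=\prod_\chi L(s,E\times\chi)$ and rescale its zeros by $\log\mathfrak{q}_F\asymp(l-1)\log(q_F^2Q_E)$, so that the conductor term is one $\widehat{\phi}(0)$, whereas the paper works twist by twist, scales zeros by $\log L=\log X^2$ (the size of a single twisted conductor $q_\chi^2Q_E$), proves the one-level density for the character family with $\operatorname{supp}\widehat{\phi}\subset(-\tfrac12,\tfrac12)$ (Theorem \ref{one-level}, via Lemmas \ref{con-sum} and \ref{lem2}), and only afterwards passes to the field average. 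The problem is that the factor $l-1$ you gain in the main term is paid back exactly in the admissible support, and the step you yourself flag as the obstacle is a genuine gap, not a technicality. With $\widehat{\phi}$ supported in $[-\tfrac12,\tfrac12]$ at your normalization the prime sums run up to $p\le\mathfrak{q}_F^{1/2}\approx X^{l-1}$, and none of the GRH inputs listed in the theorem yields the asserted ``simultaneous'' cancellation over $p$ and $\chi$ in that range. The family estimate actually available (Lemma \ref{lem2}: $\sum_\chi^*\omega(q_\chi/X)\chi(n)\ll n^{\epsilon}X^{1/2+\epsilon}$, against a family of size $\asymp X$) only controls primes up to $X^{1-\epsilon}$, i.e.\ support $<\tfrac{1}{2(l-1)}$ in your normalization, which turns your conclusion into the bound $2(l-1)+r_\Q(E)$ rather than $2+r_\Q(E)$; on the other hand GRH for each individual $L(s,E\times\chi)$ bounds each inner prime sum only by a power of $\log X$, so summing trivially over the $\asymp X$ characters and dividing by $\frak{C}_{l,Q_E}(X)\log\mathfrak{q}_F$ leaves something that grows, not $o(1)$. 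Combining the two savings would be a hybrid bilinear estimate over $p$ and $\chi$ that is not a consequence of GRH in any standard form; asserting it is precisely the missing idea, so your $l$-independent constant $2$ is not established by the argument as written.

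A secondary point: your claim that piece (i) equals $r_\Q(E)\phi(0)+o(1)$ ``using only that $L(s,E)$ is entire of order one'' does not hold. Reversing the explicit formula for $L(s,E)$ at scale $\log\mathfrak{q}_F$ converts that prime sum into a sum over all zeros of $L(s,E)$; a hypothetical zero off the critical line corresponds to a complex $\gamma$, where the Fej\'er-type test function is neither nonnegative nor small --- it grows like $\mathfrak{q}_F^{\sigma|\Im\gamma|}$ --- so those contributions are not $o(1)$. You would need to assume GRH for $L(s,E)$ as well, or better, follow the paper and never reassemble $L(s,E)$: the factorization $(\ref{decomp})$ lets you keep $r_\Q(E)$ aside exactly and run the explicit formula only for the twists, which is what makes the paper's bookkeeping (and its list of GRH hypotheses) suffice.
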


In Section \ref{S_d}, we also give an upper bound on the average analytic rank over some non-abelian fields. A number field $F$ of degree $d$ is an $S_d$-field if its normal closure $\widehat{F}$ over $\Q$ is an $S_d$ Galois extension.  For example, quadratic fields are $S_2$-fields. 

For an $S_d$-field $F$ we have 
\begin{align*}
L_F(s,E)=L(s,E)L(s,E \times \rho),
\end{align*}
where $\rho$ is the $(d-1)$-dimensional standard representation of the symmetry group $S_d$. 

Let $\mathcal{S}_{d,Q_E}$ be the family of $S_d$-fields $F$ with discriminant $D_F$ coprime to $Q_E$ and $\mathcal{S}_d$ be the family of $S_d$-fields with no restriction on discriminant. For a positive number $X$, let
$$
\mathcal{S}_{d,Q_E}(X)=\{F \in \mathcal{S}_{d,Q_E}| |D_F|\leq X \},
$$
where $D_F$ is the discriminant of the field $F$.  In \cite{OT}, Lemke Oliver and Thorne showed that there is a constant $c_d>0$ such that in $\mathcal{S}_d(X)$, for any $\epsilon>0$, there are $\gg_{E,\epsilon} X^{c_d-\epsilon}$ $S_d$-fields $F$ with $r_F(E) > r_\Q(E)$.  

For $S_3$-fields, using a recent result of Bhargava, Taniguchi and Thorne \cite{BTT} we have our second main result.
\begin{thm} \label{thm2} Assume GRH. Let $E$ be an elliptic curve over $\Q$. 
The average analytic rank $r_F(E)$ over $S_{3,Q_E}$ is bounded by $7.5+r_\Q(E)$.%\footnote{A recent result Bhargava, Taniguchi and Thorne \cite{BTT} would replace $21$ by $7.5$}
\end{thm}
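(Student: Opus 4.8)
The plan is to run the explicit-formula (one-level density) argument for the Rankin--Selberg family $\{L(s,E\times\rho)\}_{F}$, the only cubic-field-specific ingredient being the power-saving count of cubic fields with prescribed local behaviour due to Bhargava, Taniguchi and Thorne.

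\textbf{Reduction and identification of the $L$-function.} Since $L_F(s,E)=L(s,E)L(s,E\times\rho)$ for an $S_3$-field $F$, one has $r_F(E)=r_\Q(E)+\mathrm{ord}_{s=1/2}L(s,E\times\rho)$, so it suffices to bound the average of $\mathrm{ord}_{s=1/2}L(s,E\times\rho)$ over $F\in\mathcal{S}_{3,Q_E}(X)$ as $X\to\infty$. The standard $2$-dimensional representation $\rho$ of $S_3$ is monomial --- induced from a cubic character of the quadratic resolvent field $K$ of $F$ --- hence corresponds to a weight-$1$ dihedral newform $g$ of conductor $|D_F|$, and $L(s,E\times\rho)=L(s,f_E\times g)$ is an \emph{entire} degree-$4$ Rankin--Selberg $L$-function (no pole, as $f_E$ and $g$ have different weights); since $(D_F,Q_E)=1$ its conductor is $Q_E^2|D_F|^2$ (cf. Barthel--Ramakrishnan). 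We assume GRH for this $L$-function and the usual auxiliary $L$-functions.

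\textbf{Explicit formula and the conductor term.} Fix an even, nonnegative test function $\phi$ with $\phih$ supported in $[-\nu,\nu]$. Applying the explicit formula to $L(s,f_E\times g)$ and keeping only the central zero (legitimate since $\phi\ge 0$ and, under GRH, all zeros are real),
\[
\mathrm{ord}_{s=1/2}L(s,E\times\rho)\ \le\ \frac{1}{\phi(0)}\Bigl(\phih(0)\,\tfrac{\log(Q_E^2|D_F|^2)}{\log X}\ +\ \mathcal{A}_{\infty}\ -\ \tfrac{2}{\log X}\sum_{p,k}\tfrac{a_{E\times\rho}(p^k)\log p}{p^{k/2}}\,\phih\!\bigl(\tfrac{k\log p}{\log X}\bigr)\Bigr),
\]
with $\mathcal{A}_\infty=O(1/\log X)$ coming from the archimedean factor. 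By the Davenport--Heilbronn theorem $\#\mathcal{S}_{3,Q_E}(X)\sim cX$, and partial summation gives that the average of $\log|D_F|$ over this set is $(1+o(1))\log X$; hence on average the conductor term contributes $\tfrac{2\phih(0)}{\phi(0)}$ and $\mathcal{A}_\infty$ contributes $o(1)$.

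\textbf{Averaging the prime sum.} For $k=1$ we have $a_{E\times\rho}(p)=\lambda_{f_E}(p)\,\mathrm{tr}\,\rho(\mathrm{Frob}_p)$, where $\mathrm{tr}\,\rho(\mathrm{Frob}_p)$ equals $2$, $0$, or $-1$ according as $p$ splits completely, has splitting type $(1,2)$, or is inert in $F$ (the ramified $p\mid D_F$ contribute $o(1)$ in total, since $\#\{F\in\mathcal{S}_{3,Q_E}(X):p\mid D_F\}\ll X/p$). Swapping the order of summation and grouping cubic fields by splitting type at $p$, the Davenport--Heilbronn densities $\tfrac16,\tfrac12,\tfrac13$ give main term $\tfrac16(2)+\tfrac12(0)+\tfrac13(-1)=0$, and the error in the averaged $k=1$ sum is governed by the error term, uniform in $p$, in the Bhargava--Taniguchi--Thorne count of cubic fields of bounded discriminant with a prescribed \'etale algebra (equivalently, splitting type) at $p$; their power-saving estimate makes this averaged sum $o(1)$ so long as $\nu$ does not exceed an explicit threshold (here one must also track the $X^{5/6}$-scale secondary term and its $p$-dependence). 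For $k=2$, computing the local Rankin--Selberg factor gives $a_{E\times\rho}(p^2)=(\lambda_{f_E}(p)^2-2)(\lambda_g(p)^2-2\chi_K(p))$, whose average over $F$ (again Davenport--Heilbronn with a local condition, plus the zero-free region for $L(s,\mathrm{Sym}^2 E)$) equals $\lambda_{f_E}(p)^2-2+o(1)$; summing this against $\tfrac{\log p}{p}\,\phih$ via Mertens' theorem contributes exactly $\tfrac12+o(1)$ to the average of $\mathrm{ord}_{s=1/2}L(s,E\times\rho)$ --- the orthogonal-symmetry term of the family $\{f_E\times g\}_F$, just as for quadratic twists. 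The terms with $k\ge 3$ contribute $o(1)$.

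\textbf{Conclusion and the main obstacle.} Altogether the average of $\mathrm{ord}_{s=1/2}L(s,E\times\rho)$ over $\mathcal{S}_{3,Q_E}(X)$ is $\le \tfrac{2\phih(0)}{\phi(0)}+\tfrac12+o(1)$. Choosing $\phi$ to be the Fej\'er kernel with $\phih$ the triangle on $[-\nu,\nu]$ (so $\phih(0)/\phi(0)=1/\nu$) and $\nu$ as large as the Bhargava--Taniguchi--Thorne error estimate permits --- which works out to $\nu=2/7$ --- the bound becomes $\tfrac{2}{2/7}+\tfrac12=7.5$, whence $\mathrm{avg}\,r_F(E)\le 7.5+r_\Q(E)$. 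I expect the delicate point to be exactly the $k=1$ prime sum: one needs the cubic-field count to be power-saving \emph{and} to have mild, uniform dependence on the modulus $p^2$ of the local condition, and to verify that neither the $X^{5/6}$-secondary term nor the ramified-prime and $k\ge 2$ contributions erode the admissible support; the remainder is routine one-level density bookkeeping, entirely parallel to the quadratic-twist case.
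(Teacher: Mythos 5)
Your proposal is correct and follows essentially the same route as the paper: decompose $L_F(s,E)=L(s,E)L(s,E\times\rho)$, average the explicit formula for $L(s,E\times\rho)$ over $\mathcal{S}_{3,Q_E}(X)$ using the Bhargava--Taniguchi--Thorne power-saving count with local splitting conditions, note that the Davenport--Heilbronn densities kill the $k=1$ main term while the $k=2$ term yields the orthogonal contribution $\phi(0)/2$ via the entireness of $L(s,\mathrm{Sym}^2 f)$, and optimize with the Fej\'er kernel. Your support threshold $\nu=2/7$ (in the $\log X$ normalization) is exactly the paper's $\sigma<1/7$ (the paper scales zeros by $\log L=2\log X$), and the constraint indeed comes from the $p^{2/3}X^{2/3+\epsilon}$ term in the count just as you anticipated, so the bound $2/\nu+1/2=7.5$ matches.
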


\begin{rem}
We also have an analogue of Theorem \ref{thm2} for $S_4$-fields and $S_5$-fields, which are mentioned at the end of Section \ref{S_d}. These poor bounds are due to the poor error term of the counting functions $(\ref{count})$ for $S_4$-fields and $S_5$-fields. See \cite{CKcnt}.
\end{rem}

In Section \ref{EF-sec}, we introduce an explicit formula we use which is one of the main tools for one-level density. In Section \ref{cyclic fields}, we recall some preliminaries on primitive Dirichlet characters and lemmas for proof of Theorem \ref{main}. Sections \ref{cyclic-l} and \ref{S_d} are devoted to the proof of Theorems \ref{main} and \ref{thm2}.  

\section{Explicit formula} \label{EF-sec}
%\begin{thm} [Theorem 5.12 in IK]
%Let $g(y)$ be a function of Schwartz class on $\mathbb{R}$ which is even. Let $h(t)$ be the Fourier transform of $g(y)$. We have
%\begin{align}
%&\sum_{n}(\Lambda_f(n)+\overline{\Lambda_f}(n) ) \frac{g(\log n)}{\sqrt{n}} = g(0) \log q(f) + rh\left( \frac{i}{4 \pi}\right) \\
%&+ \frac{1}{2\pi}\int_{-\infty}^\infty \left( \frac{\gamma'}{\gamma}(f,\frac 12 + it) +\frac{\gamma'}{\gamma}(f,\frac 12 - it) \right) h \left( \frac{t}{2\pi}\right)dt -\sum_\rho h\left( \frac{\gamma}{2 \pi}\right)
%\end{align}
%where $\rho=\frac 12 + i\gamma$ runs over the zeros of $L(s,f)$ in the strip $0\leq \sigma \leq 1$ with relevant multiplicity. 
%\end{thm}

Let $L(s,f)$ be an entire $L$-function with conductor $q(f)$ and gamma factor $\gamma(f,s)$ which satisfies the standard functional equation:
\begin{align*}
\Lambda(s, f)=q(f)^\frac{s}{2}\gamma(f,s)L(s,f)=\omega_f\Lambda(1-s, \overline{f}),
\end{align*}
where $\omega_f$ is the root number of modulus $1$. Let $\Lambda_f(n)=a_f(n)\Lambda(n)$ be the $n$-th coefficient of the Dirichlet series $-\frac{L'}{L}(s,f)=\sum_{n=1}^\infty \frac{\Lambda_f(n)}{n^s}$. If the Euler factor of $L(s,f)$ at the place $p$ is $\prod_{i=1}^d\left(1-\frac{\alpha_i(p)}{p^s} \right)^{-1}$, then $a_f(p^k)=\sum_{i=1}^d \alpha_i(p)^k$ and $\Lambda_f(p^k)=a_f(p^k)\log p$. By \cite[Theorem 5.12]{IK}, we have the following explicit formula.   
\begin{lem} Let $\phi$ be an even Schwartz class function such that its Fourier transform $\widehat{\phi}$ is compactly supported. Let $L(s,f)$ be an $L$-function as above. For a  parameter $L>0$, we have  
\begin{align*}
\sum_{\rho=\frac{1}{2}+i\gamma} \phi \left( \gamma \frac{\log L}{2\pi}\right) = \widehat{\phi}(0)\frac{\log q(f)}{\log L}- \frac{1}{\log L}\sum_{n}\left(\frac{\Lambda_f(n)}{\sqrt{n}}+\frac{\Lambda_{\overline{f}}(n)}{\sqrt{n}} \right) \widehat{\phi}\left(\frac{\log n}{\log L} \right)\\
+ \frac{1}{2\pi }\int_{-\infty}^\infty \left( \frac{\gamma'}{\gamma}(f,\frac 12 + it) +\frac{\gamma'}{\gamma}(\overline{f},\frac 12 - it) \right) \phi \left( \frac{t \log L}{2\pi}\right)dt,
\end{align*}  
where the sum is over non-trivial zeros $\rho=\frac{1}{2}+i\gamma$ of $L(s, f)$ with multiplicity. 
\end{lem}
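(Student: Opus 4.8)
The plan is to derive the formula from the Hadamard factorization and functional equation of the completed $L$-function $\Lambda(s,f)$ by a contour shift, with the test function $\phi$ producing the integration kernel via Fourier/Mellin inversion. First, since $\widehat{\phi}$ is compactly supported, say in $[-\nu,\nu]$, the Paley--Wiener theorem shows that $\phi$ extends to an entire function with $\phi(x+iy)\ll_N(1+|x|)^{-N}e^{2\pi\nu|y|}$ for every $N$. Accordingly I set
\begin{align*}
g(s):=\phi\!\left(\frac{(s-\tfrac12)\log L}{2\pi i}\right),
\end{align*}
which is entire, satisfies $g(s)=g(1-s)$ by evenness of $\phi$, decays rapidly as $|\Im s|\to\infty$ in any fixed vertical strip, restricts on the critical line to $g(\tfrac12+it)=\phi\!\left(\tfrac{t\log L}{2\pi}\right)$, and satisfies $g(\rho)=\phi\!\left(\gamma\tfrac{\log L}{2\pi}\right)$ at a zero $\rho=\tfrac12+i\gamma$.

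Next, since $L(s,f)$ is entire, inside the critical strip the only poles of $\frac{L'}{L}(s,f)$ are the simple poles at the non-trivial zeros $\rho=\tfrac12+i\gamma$, with residue equal to the multiplicity. Integrating $\frac{1}{2\pi i}\frac{L'}{L}(s,f)g(s)$ around the rectangle $-\varepsilon\le\Re s\le 1+\varepsilon$, $|\Im s|\le T$, and letting $T\to\infty$ along heights avoiding the zeros, the horizontal sides drop out --- $g$ decays rapidly in $\Im s$ while $\frac{L'}{L}$ grows at most polynomially there, by the order-$1$ Hadamard product for $\Lambda(s,f)$ together with the zero count $N(T+1)-N(T)\ll\log(q(f)(|T|+2))$ --- so the residue theorem gives
\begin{align*}
\sum_{\rho}\phi\!\left(\gamma\tfrac{\log L}{2\pi}\right)=\frac{1}{2\pi i}\left(\int_{(1+\varepsilon)}-\int_{(-\varepsilon)}\right)\frac{L'}{L}(s,f)\,g(s)\,ds.
\end{align*}

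Then I evaluate the two lines. On $\Re s=1+\varepsilon$ I insert the absolutely convergent expansion $\frac{L'}{L}(s,f)=-\sum_n\Lambda_f(n)n^{-s}$, interchange with the integral, and shift each $\frac{1}{2\pi i}\int n^{-s}g(s)\,ds$ to $\Re s=\tfrac12$; unwinding $s=\tfrac12+it$ and the definition of $g$ turns the inner integral into $\frac{1}{\sqrt n\,\log L}\widehat{\phi}\!\left(\tfrac{\log n}{\log L}\right)$, so this line contributes $-\frac{1}{\log L}\sum_n\frac{\Lambda_f(n)}{\sqrt n}\widehat{\phi}\!\left(\tfrac{\log n}{\log L}\right)$, a finite sum because $\widehat{\phi}$ has compact support. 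On $\Re s=-\varepsilon$ I use the functional equation: combining $\Lambda(s,f)=\omega_f\Lambda(1-s,\overline f)$ with $\frac{\Lambda'}{\Lambda}(s,f)=\tfrac12\log q(f)+\frac{\gamma'}{\gamma}(f,s)+\frac{L'}{L}(s,f)$ and $q(\overline f)=q(f)$ gives
\begin{align*}
\frac{L'}{L}(s,f)=-\log q(f)-\frac{\gamma'}{\gamma}(f,s)-\frac{\gamma'}{\gamma}(\overline f,1-s)-\frac{L'}{L}(1-s,\overline f).
\end{align*}
Substituting this, the constant $-\log q(f)$ (with the integral shifted to $\Re s=\tfrac12$) produces $\widehat{\phi}(0)\frac{\log q(f)}{\log L}$; the $\frac{L'}{L}(1-s,\overline f)$ term, after $s\mapsto 1-s$ and $g(1-s)=g(s)$, becomes the previous computation with $\overline f$ in place of $f$ and supplies the $\Lambda_{\overline f}(n)$ half of the prime sum; and the two $\frac{\gamma'}{\gamma}$ terms, shifted to $\Re s=\tfrac12$ and parametrized by $t=\Im s$, give precisely
\begin{align*}
\frac{1}{2\pi}\int_{-\infty}^{\infty}\left(\frac{\gamma'}{\gamma}(f,\tfrac12+it)+\frac{\gamma'}{\gamma}(\overline f,\tfrac12-it)\right)\phi\!\left(\tfrac{t\log L}{2\pi}\right)dt.
\end{align*}
Collecting the pieces gives the stated identity.

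\textbf{The main obstacle.} The one genuinely delicate point is the unconditional control of $\frac{L'}{L}(s,f)$ in vertical strips, needed both to kill the horizontal segments of the rectangle and to justify each contour shift: this is where one must use that $\Lambda(s,f)$ is entire of order $1$ with the stated functional equation, extract its Hadamard product, and estimate the logarithmic derivative away from the sparsely distributed zeros (with Stirling's formula handling $\frac{\gamma'}{\gamma}$). Everything else is routine bookkeeping with the Paley--Wiener decay of $\phi$, the compact support of $\widehat{\phi}$ (which truncates the prime sum to finitely many terms), and choosing $\varepsilon$ small enough that the shifts of the archimedean integrals cross no pole of the gamma factors.
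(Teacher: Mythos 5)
Your argument is correct: the paper itself gives no proof but simply quotes \cite[Theorem 5.12]{IK}, and your contour-shift derivation (Paley--Wiener extension of $\phi$, residue theorem applied to $\frac{L'}{L}(s,f)g(s)$ on a rectangle, functional equation on the left edge, and unwinding the critical-line integrals into $\widehat{\phi}$) is exactly the standard proof underlying that cited result, so it is essentially the same approach. The only point to watch is that for a general gamma factor with a pole on $\Re s=0$ the rectangle at $\Re s=-\varepsilon$ picks up the corresponding trivial zero, whose residue must be seen to cancel against the pole of $\frac{\gamma'}{\gamma}(f,s)$ crossed when shifting to the critical line (or one integrates $\frac{\Lambda'}{\Lambda}$ instead); for the $L$-functions used in this paper the gamma-factor poles lie in $\Re s\leq -\frac12$, so your choice of small $\varepsilon$ suffices.
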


We can show using \cite[Lemma 12.14]{MV} that
\begin{align*}
 \frac{1}{2\pi }\int_{-\infty}^\infty \left( \frac{\gamma'}{\gamma}(f,\frac 12 + it) +\frac{\gamma'}{\gamma}(f,\frac 12 - it) \right) \phi \left( \frac{t \log L}{2\pi}\right)dt \ll \frac{1}{\log L}.
\end{align*}
We assume that $f$ satisfies the Ramanujan-Petersson conjecture. This assumption is true for the $L$-functions we consider. Then we have   
\begin{align*}
\frac{1}{\log L}\sum_{n=p^k, k\geq 3}\left(\frac{\Lambda_f(n)}{\sqrt{n}}+\frac{\Lambda_{\overline{f}}(n)}{\sqrt{n}} \right) \widehat{\phi}\left(\frac{\log n}{\log L} \right) \ll \frac{1}{\log L},
\end{align*}
by absolute convergence of the Dirichlet series. Therefore, 
\begin{align} \label{EF}
\sum_{\gamma} \phi \left( \gamma \frac{\log L}{2\pi}\right) = \widehat{\phi}(0)\frac{\log q(f)}{\log L}- \frac{1}{\log L}\sum_{p}\left(\frac{\Lambda_f(p)}{\sqrt{p}}+\frac{\Lambda_{\overline{f}}(p)}{\sqrt{p}} \right) \widehat{\phi}\left(\frac{\log p}{\log L} \right)\\
- \frac{1}{\log L}\sum_{p}\left(\frac{\Lambda_f(p^2)}{p}+\frac{\Lambda_{\overline{f}}(p^2)}{2} \right) \widehat{\phi}\left(\frac{2\log p}{\log L} \right) + O\left( \frac{1}{\log L} \right). \nonumber
\end{align}  
We will use $(\ref{EF})$ for our one-level density computation. 

\section{Cyclic extensions of degree $l$} \label{cyclic fields}
For a prime $l\geq 3$, let $F$ be a cyclic extension of degree $l$ over $\Q$. There is an $(l-1)$-to-1 correspondence between primitive Dirichlet characters $\chi$ of order $l$ and cyclic extensioms $F$ of degree $l$ over $\Q$. Thus, counting cyclic extensions of degree $l$ over $\Q$ can be reduced to counting primitive Dirichlet characters. 

In \cite{CP2019}, the author and Park summarize the following well-known results for primitive Dirichlet characters of prime order $l$. 
\begin{prop}\label{character2}
Assume that $l$ is a prime. 
\begin{enumerate}
\item When $l=2$, $q_\chi$ is the conductor of a primitive quadratic character $\chi$ if and only if $q_\chi=2^b m$ where $m$ is an odd square-free integer and $b=0$, $2$ or $3$.  
\item When $l>2$, $q_\chi$ is the conductor of a primitive character of order $l$ if and only if 
$$
q_\chi=l^b \prod_{ q \equiv 1 \mod l }^{\text{finite}}q, \quad \mbox{$b$=$0$ or $2$}.
$$
\item Let $q$ be the conductor of a primitive character of order $l$ with $\gcd(q, l)=1$. Then, the number of primitive characters of order $l$ with conductor $q$ is $(l-1)^{\omega(q)}$, where $\omega(n)$ is the number of  distinct prime divisors of $n$.
\end{enumerate}
\end{prop}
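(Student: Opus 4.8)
The plan is to reduce everything to the structure of the local unit groups $(\Z/p^a\Z)^\times$ via the Chinese Remainder Theorem together with the standard dictionary identifying Dirichlet characters modulo $n$ with characters of $(\Z/n\Z)^\times$. Writing $n=\prod_p p^{a_p}$, a character $\chi$ modulo $n$ factors uniquely as $\chi=\prod_p\chi_p$ with $\chi_p$ a character modulo $p^{a_p}$; moreover $\chi$ is primitive modulo $n$ if and only if each $\chi_p$ is primitive modulo $p^{a_p}$, and $\mathrm{ord}(\chi)=\mathrm{lcm}_p\mathrm{ord}(\chi_p)$. Since the order $l$ is prime, each local component $\chi_p$ is trivial or of exact order $l$, and a primitive component at $p$ is in particular nontrivial; hence for a primitive $\chi$ of order $l$ every nontrivial local component is primitive of exact order $l$. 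This reduces the problem to two purely local questions: for which prime powers $p^a$ does a primitive character of order $l$ modulo $p^a$ exist, and when it exists, how many are there.

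For the local analysis I would use the well-known structure of $(\Z/p^a\Z)^\times$: it is cyclic of order $p^{a-1}(p-1)$ for odd $p$, while $(\Z/2\Z)^\times$ is trivial, $(\Z/4\Z)^\times\cong\Z/2\Z$, and $(\Z/2^a\Z)^\times\cong\Z/2\Z\times\Z/2^{a-2}\Z$ for $a\geq 3$. A character modulo $p^a$ is imprimitive precisely when it is trivial on the kernel $K_a$ of the reduction $(\Z/p^a\Z)^\times\to(\Z/p^{a-1}\Z)^\times$, and $K_a$ has order $p$; since $K_a$ is contained in the kernel of every reduction to a smaller power, primitivity modulo $p^a$ is equivalent to nontriviality of $\chi_p$ on $K_a$. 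The key point is to decide whether an order-$l$ character can be nontrivial on $K_a$. If $p\nmid l$, then $|K_a|=p$ is coprime to $l$, so every order-$l$ character is automatically trivial on $K_a$; hence a primitive order-$l$ character at $p$ forces $a=1$, and exists iff $l\mid p-1$ — which for $l=2$ holds for every odd $p$, for $l>2$ holds iff $p\equiv 1\pmod l$, and for $p=2,\ l>2$ never holds since $(\Z/2^a\Z)^\times$ is a $2$-group. The remaining small cases $l=2$, $p=2$ (conductors $4$ and $8$ admissible, higher powers not) are checked by hand using that squares lie in the kernel of every quadratic character. Finally, when $p=l$ and $l>2$, one works in $(\Z/l^a\Z)^\times\cong\Z/l^{a-1}\Z\times\Z/(l-1)\Z$: for $a=1$ the group has order prime to $l$ so there is no order-$l$ character; for $a=2$ the order-$l$ characters are exactly those nontrivial on the $\Z/l\Z$-factor, which is $K_2$, so they are primitive; for $a\geq 3$ the order-$l$ subgroup $K_a\subset\Z/l^{a-1}\Z$ lies inside the kernel of every order-$l$ character of $\Z/l^{a-1}\Z$, so there is no primitive one. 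Assembling these local statements over all primes yields the conductor descriptions (1) and (2).

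For (3), the hypothesis $\gcd(q,l)=1$ together with (2) forces $q$ to be a squarefree product of primes all congruent to $1\pmod l$. The factorization $\chi=\prod_{p\mid q}\chi_p$ then gives a bijection between primitive order-$l$ characters of conductor $q$ and tuples $(\chi_p)_{p\mid q}$ with each $\chi_p$ a character of exact order $l$ modulo $p$. Since $(\Z/p\Z)^\times$ is cyclic of order $p-1$ divisible by $l$, its dual group contains exactly $\varphi(l)=l-1$ elements of exact order $l$; taking the product over the $\omega(q)$ prime factors of $q$ gives $(l-1)^{\omega(q)}$.

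The statement is entirely standard, so I expect no genuine obstacle; the only steps requiring a little care are the local analysis at $p=l$ for $l>2$ (distinguishing $a=2$ from $a\geq 3$) and the two exceptional $2$-power conductors $4$ and $8$ in the case $l=2$, both of which reduce to explicit descriptions of small cyclic or bicyclic groups.
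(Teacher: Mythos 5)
Your proposal is correct. Note, however, that the paper itself offers no proof of this proposition: it simply quotes the statements from \cite{CP2019} as well-known facts about primitive Dirichlet characters of prime order, so there is no argument in the paper to compare against in detail. Your CRT reduction to local components, the criterion that primitivity modulo $p^a$ is nontriviality on the kernel $K_a$ of reduction to modulus $p^{a-1}$, and the case analysis are exactly the standard route, and you handle the genuinely delicate points correctly: for $p\nmid l$ the order-$l$ character is forced to have local exponent $1$ and exists iff $l\mid p-1$; at $p=l>2$ the structure $(\Z/l^a\Z)^\times\cong\Z/l^{a-1}\Z\times\Z/(l-1)\Z$ gives primitive order-$l$ characters precisely at $a=2$ (nontrivial on $K_2$) and none for $a\geq 3$ since $K_a$ sits inside the index-$l$ subgroup killed by any order-$l$ character; and for $l=2$ the exceptional conductors $4$ and $8$ (but not $2^a$ for $a\geq 4$, since $1+2^{a-1}$ is then a square) are checked directly. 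The count in (3) follows from the bijection with tuples of exact-order-$l$ local characters, each prime contributing $\varphi(l)=l-1$ choices, giving $(l-1)^{\omega(q)}$. The only cosmetic caveats: $|K_a|=p$ holds for $a\geq 2$ (for $a=1$ primitivity just means nontriviality, which your argument uses implicitly), and in (2) one should tacitly exclude the degenerate case $q_\chi=1$; neither affects the proof.
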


\begin{rem}
Since there are $(l-1)$ primitive Dirichlet character of order $l$ with conductor $l^2$, we can see that  the number of primitive characters of order $l$ with conductor $q$ is also $(l-1)^{\omega(q)}$.
\end{rem}

Recall that $E$ is an elliptic curve over $\mathbb{Q}$ with conductor $Q_E$.  We want to count the fields in $C_{\,Q_E}$ by considering the primitive Dirichlet characters of order $l$ with conductor $q_\chi$ coprime to $Q_E$. This can be achived by the following generating series:
\begin{align*}
\left( 1+ \frac{(l-1)}{l^{2s}} \right)^{1-\delta_{l|Q_E}} \prod_{p\equiv 1 \mbox{ (mod $l$)},p \nmid Q_E} \left( 1+ \frac{(l-1)}{p^s}\right)=\sum_{q=1}^\infty \frac{a(q)}{q^s}, 
\end{align*} 
where $a(q)$ is the number of primitive Dirichlet characters $\chi$ of order $l$ with conductor $q_\chi$ coprime to $Q_E$, which is $(l-1)^{\omega(q) }$. In \cite{CP2019}, we showed that the Dirichlet series $\prod_{p\equiv 1 \mbox{ (mod $l$)}} \left( 1+ \frac{(l-1)}{p^s}\right)$ has meromorphic continuation for $\Re(s)>1/4$ with a simple pole at $s=1$ and a pole of a finite order at $s=1/3$. Since the term $\frac{p^s}{p^s+(l-1)}$ for a prime divisor $p$ of $Q_E$ congruent to $1$ modulo $l$ has poles on the line $Re(s)=\log_p(l-1)<1$, the Dirichlet series $\sum_{q=1}^\infty a(q)q^{-s}$ is meromorphic with a simple pole at $s=1$ for $\Re(s)>H_{Q_E}$ for some constant $H_{Q_E}$ with $\frac{1}{3}<H_{Q_E} <1$. 

We count the primitive characters with a weight. Let $\omega$ be a  non-negative Schwartz class function. Then, we define
\begin{align*}
W_{Q_E}(X)=\sum_\chi^* \omega\left( \frac{q_\chi}{X}\right)=\sum_{q}\omega\left( \frac{q}{X}\right)a(q),
\end{align*}
where the first sum is over all primitive characters of order $l$ with conductors $q_\chi$ coprime to $Q_E$. 
%Let
%\begin{align*}
%A_l(p)=\left\{ \begin{array}{lclc} \frac{p}{p+(l-1)} & \mbox{ if $p\equiv 1$ mod $l$,}  \\
%                                                    \frac{l^2}{l^2+(l-1)} & \mbox{ if $p=l$,}\\
%                                                            1                  & \mbox{otherwise.} 
%\end{array} \right.
%\end{align*}

\begin{lem} \label{lem1} Under GRH, for any $\varepsilon>0$
\begin{align*}
W_{Q_E}(X)=R_{\omega,l, Q_E}X+O_{\omega,l,Q_E,\varepsilon}(X^{H_{Q_E}+\varepsilon}) \mbox{ for some constant $R_{\omega,l,Q_E}$}.
\end{align*}
%In general, when $n$ is a $l$-th power,
%\begin{align*}
%\sum_\chi^* \omega\left( \frac{q_\chi}{X}\right)\chi(n)=A_l(n)R_{\omega,l}X+O_{\omega,l}(X^{1/3}),
%\end{align*}
%where $A_l(n)=\prod_{p | n}A_l(p)$.
\end{lem}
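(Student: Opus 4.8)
The plan is to obtain the asymptotic by a Mellin‑transform and contour‑shift argument applied to the Dirichlet series $D(s):=\sum_{q=1}^{\infty}a(q)q^{-s}$, whose analytic properties were recalled just above. Let $\widetilde{\omega}(s)=\int_0^{\infty}\omega(x)x^{s-1}\,dx$ denote the Mellin transform of $\omega$; since $\omega$ is of Schwartz class (in particular smooth on $[0,\infty)$), $\widetilde{\omega}$ is holomorphic in $\Re(s)>0$ and decays faster than any fixed power of $|\Im(s)|$ on every vertical strip $0<\delta_1\le\Re(s)\le\delta_2$. By Mellin inversion, for any $c>1$,
\begin{align*}
W_{Q_E}(X)=\sum_{q}a(q)\,\omega\!\left(\frac{q}{X}\right)=\frac{1}{2\pi i}\int_{(c)}\widetilde{\omega}(s)\,X^{s}\,D(s)\,ds,
\end{align*}
the interchange of summation and integration being legitimate by the absolute convergence of $D(s)$ in $\Re(s)>1$ and the rapid decay of $\widetilde{\omega}$.

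The key analytic input on $D(s)$ comes from \cite{CP2019}. Over $K=\Q(\zeta_l)$ one factors $D(s)=\zeta_K(s)\,G(s)$, where $G(s)$ is a product of fixed (rational, possibly negative) powers of $\zeta_K(2s)$ and $\zeta_K(3s)$, of an absolutely convergent Euler product $H(s)$ for $\Re(s)>\tfrac14$, and of the finite product $E_{Q_E}(s)$ of the Euler factors at the primes $p\mid lQ_E$ that are removed from $D$. The power of $\zeta_K(3s)$ accounts for the singularity at $s=\tfrac13$, while $E_{Q_E}(s)$ has poles only on the vertical lines $\Re(s)=\log_p(l-1)<1$; as $H_{Q_E}$ is, by construction, larger than $\tfrac13$ and than all of these $\log_p(l-1)$, the function $D(s)$ is holomorphic in $\Re(s)>H_{Q_E}$ apart from a simple pole at $s=1$ inherited from $\zeta_K$, with residue $\varrho:=\operatorname*{Res}_{s=1}D(s)=\big(\operatorname*{Res}_{s=1}\zeta_K(s)\big)\,G(1)>0$. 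Invoking GRH for $\zeta_K$ (equivalently, for $\zeta$ and the Dirichlet $L$-functions modulo $l$): the Lindel\"of bound together with the functional equation gives $\zeta_K(\sigma+it)\ll_{l,\varepsilon}(1+|t|)^{A}$ for a fixed $A=A(l)$ and all $\sigma\ge\tfrac14+\varepsilon$, and the absence of zeros in $\Re(s)>\tfrac12$ gives $\zeta_K(\sigma+it)^{-1}\ll_{l,\varepsilon}(1+|t|)^{\varepsilon}$ for $\sigma\ge\tfrac12+\varepsilon$. Since $2H_{Q_E}>\tfrac23>\tfrac12$ and $3H_{Q_E}>1$, each of the above powers of $\zeta_K(2s)$ and $\zeta_K(3s)$, as well as $H(s)$ and $E_{Q_E}(s)$, is polynomially bounded in $|t|$ on vertical lines with $\Re(s)\ge H_{Q_E}+\varepsilon$, whence $D(\sigma+it)\ll_{l,Q_E,\varepsilon}(1+|t|)^{A'}$ uniformly for $H_{Q_E}+\varepsilon\le\sigma\le c$.

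With these facts in hand I would shift the contour from $\Re(s)=c$ to $\Re(s)=H_{Q_E}+\varepsilon$. The rapid decay of $\widetilde{\omega}$ against the polynomial growth of $D$ makes the two horizontal segments vanish in the limit, and the only pole crossed is the simple one at $s=1$, contributing the residue $\widetilde{\omega}(1)\,\varrho\,X=:R_{\omega,l,Q_E}X$ with $R_{\omega,l,Q_E}=\varrho\int_0^{\infty}\omega(x)\,dx$. The remaining integral satisfies
\begin{align*}
\frac{1}{2\pi i}\int_{(H_{Q_E}+\varepsilon)}\widetilde{\omega}(s)\,X^{s}\,D(s)\,ds\;\ll\;X^{H_{Q_E}+\varepsilon}\int_{-\infty}^{\infty}\big|\widetilde{\omega}(H_{Q_E}+\varepsilon+it)\big|\,\big|D(H_{Q_E}+\varepsilon+it)\big|\,dt,
\end{align*}
and the last integral converges to a quantity depending only on $\omega,l,Q_E,\varepsilon$, since $\widetilde{\omega}$ decays faster than the polynomial growth of $D$. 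This gives exactly $W_{Q_E}(X)=R_{\omega,l,Q_E}X+O_{\omega,l,Q_E,\varepsilon}(X^{H_{Q_E}+\varepsilon})$.

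The main obstacle is the polynomial‑in‑$|t|$ bound for $D(s)$ on vertical lines in $H_{Q_E}<\Re(s)<1$ used in the contour shift; this is precisely where GRH is needed, through the Lindel\"of‑type (convexity‑breaking) bound for $\zeta_K$ inside and to the left of the critical strip and through the bound $1/\zeta_K(s)\ll(1+|t|)^{\varepsilon}$ on $\Re(s)\ge\tfrac12+\delta$. Everything else—the Mellin setup, the identification and simplicity of the pole at $s=1$, and the decay of the horizontal segments—is routine once the factorization of $D(s)$ from \cite{CP2019} is available.
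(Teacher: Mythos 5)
Your Mellin-inversion and contour-shift argument is essentially the paper's own route: the paper simply cites \cite[Lemma 3.8]{CP2019}, which proceeds in the same way (Mellin transform of $\omega$, continuation of $\sum_q a(q)q^{-s}$ via $\zeta_K$ for $K=\Q(\zeta_l)$, shift past the simple pole at $s=1$, residue giving $R_{\omega,l,Q_E}X$). One small caution: your factorization has a half-integer power of $\zeta_K(2s)$, so $s=\tfrac12$ is a branch point rather than a regular point, and the straight shift to $\Re(s)=H_{Q_E}+\varepsilon$ is only legitimate because the abscissa $H_{Q_E}$, as defined in the paper (the series is holomorphic apart from the simple pole at $s=1$ to the right of it), implicitly lies to the right of that singularity; if one insisted on $H_{Q_E}<\tfrac12$ the contour would have to detour around the cut and the error would degrade to $O(X^{1/2})$.
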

\begin{proof}
The proof is essentially the same as that of \cite[Lemma 3.8]{CP2019}.
\end{proof}

\begin{lem} \label{con-sum} Under GRH,
\begin{align*}
\sum_\chi^* \omega\left( \frac{q_\chi}{X}\right) \log q_\chi=  W_{Q_E}(X) \log X + O_{\omega,l,Q_E}(X).
\end{align*}
\end{lem}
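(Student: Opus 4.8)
The plan is to remove the logarithm by writing $\log q_\chi = \log X + \log(q_\chi/X)$. This splits the sum as
\begin{align*}
\sum_\chi^* \omega\!\left(\frac{q_\chi}{X}\right)\log q_\chi = (\log X)\sum_\chi^* \omega\!\left(\frac{q_\chi}{X}\right) + \sum_\chi^* \omega\!\left(\frac{q_\chi}{X}\right)\log\!\left(\frac{q_\chi}{X}\right) = W_{Q_E}(X)\log X + R(X),
\end{align*}
so everything reduces to showing $R(X) \ll_{\omega,l,Q_E} X$.

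To estimate $R(X)$ I would run the same Mellin-transform argument used for Lemma \ref{lem1}. Put $h(t) = \omega(t)\log t$; this is smooth on $(0,\infty)$, rapidly decreasing at $\infty$, and $|h(t)| \ll |\log t|$ near $t=0$, so its Mellin transform $\widetilde{h}(s) = \int_0^\infty h(t) t^{s-1}\,dt$ converges and is holomorphic for $\Re(s)>0$, equals $\widetilde{\omega}'(s)$ (differentiate the Mellin transform of $\omega$ under the integral sign), and decays faster than any fixed power of the imaginary part on vertical lines in $\Re(s)>0$. Letting $D(s) = \sum_q a(q)q^{-s}$ be the generating Dirichlet series introduced above (meromorphic for $\Re(s)>H_{Q_E}$ with a simple pole at $s=1$), Mellin inversion gives, for $c>1$,
\begin{align*}
R(X) = \sum_q a(q)\, h\!\left(\frac{q}{X}\right) = \frac{1}{2\pi i}\int_{(c)} D(s)\,\widetilde{\omega}'(s)\, X^s\, ds.
\end{align*}

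Now I would shift the contour to the line $\Re(s) = H_{Q_E}+\varepsilon$. Since $\widetilde{\omega}'(s)$ is holomorphic for $\Re(s)>0$ and $D(s)$ is holomorphic for $\Re(s)>H_{Q_E}$ except for the simple pole at $s=1$, the only residue crossed is at $s=1$, contributing $\mathrm{Res}_{s=1}D(s)\cdot\widetilde{\omega}'(1)\cdot X \ll_{\omega,l,Q_E} X$. On the shifted line, GRH yields a bound for $D(H_{Q_E}+\varepsilon+it)$ that is polynomial in $1+|t|$ — this is precisely the input already used for Lemma \ref{lem1} (equivalently \cite[Lemma 3.8]{CP2019}), $D(s)$ being assembled from $\zeta_K(s)$ and Hecke $L$-functions over $K=\Q(\zeta_l)$ together with finitely many Euler factors at the primes dividing $Q_E$ — and combined with the rapid decay of $\widetilde{\omega}'$ this makes the shifted integral $\ll_{\omega,l,Q_E,\varepsilon}X^{H_{Q_E}+\varepsilon}$. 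Since $H_{Q_E}<1$, both terms are $O(X)$, giving $R(X)\ll X$ and hence the lemma. The only slightly delicate point is the contour shift, but it is structurally identical to the one already carried out for Lemma \ref{lem1}: the extra weight $\log(q_\chi/X)$ is harmless because it merely replaces the Mellin transform $\widetilde{\omega}$ by its derivative $\widetilde{\omega}'$, which has the same domain of holomorphy and the same decay on vertical lines. Alternatively, $R(X)\ll X$ can be deduced directly from Lemma \ref{lem1} by partial summation.
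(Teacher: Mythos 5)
Your argument is correct, and it is essentially the argument the paper delegates to \cite[Lemma 3.5]{CP2019}: split $\log q_\chi=\log X+\log(q_\chi/X)$ and handle the remainder by the same Mellin-inversion/contour-shift machinery as in Lemma \ref{lem1}, the weight $\omega(t)\log t$ simply replacing $\widetilde{\omega}$ by $\widetilde{\omega}'$, which has the same holomorphy and vertical decay; the residue at $s=1$ gives the admissible $O(X)$ term. (Your alternative via partial summation is also fine, provided one notes that the sharp count $\sum_{q\le t}a(q)\ll t$ follows, e.g., from Lemma \ref{lem1} with a nonnegative Schwartz majorant of the indicator of $[0,1]$.)
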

\begin{proof}
The proof is essentially the same as that of \cite[Lemma 3.5]{CP2019}.
\end{proof}

\begin{lem} \label{lem2} Under GRH when $n$ is not a $l$-th power,
\begin{align*}
\sum_\chi^* \omega\left( \frac{q_\chi}{X}\right)\chi(n)\ll_{\omega, \epsilon} n^\epsilon X^{1/2+\epsilon}.
\end{align*}
\end{lem}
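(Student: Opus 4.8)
The plan is to expand the weighted character sum into a contour integral / Mellin-transform representation and then shift contours, using GRH to control the resulting $L$-function. First I would write
\[
\sum_\chi^* \omega\!\left(\frac{q_\chi}{X}\right)\chi(n)
= \frac{1}{2\pi i}\int_{(c)} \widetilde{\omega}(s)\, X^s \left(\sum_\chi^* \frac{\chi(n)}{q_\chi^{\,s}}\right) ds,
\]
where $\widetilde{\omega}$ is the Mellin transform of $\omega$, which decays faster than any polynomial on vertical lines. So the whole problem reduces to understanding the Dirichlet series $D_n(s) = \sum_\chi^* \chi(n)\, q_\chi^{-s}$, the sum being over primitive characters of order $l$ with conductor coprime to $Q_E$.

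Next I would produce an Euler product for $D_n(s)$. Writing a primitive order-$l$ character as a product of local characters at the ramified primes (those $\equiv 1 \bmod l$, plus possibly $l$), by multiplicativity
\[
D_n(s) = \left(1 + \frac{1}{l^{2s}}\sum_{\psi_l}\psi_l(n)\right)^{1-\delta_{l\mid Q_E}}
\prod_{\substack{p\equiv 1\,(l)\\ p\nmid Q_E}}\left(1 + \frac{1}{p^s}\sum_{\psi_p}\psi_p(n)\right),
\]
where the inner sum runs over the $l-1$ nontrivial characters of the cyclic group $(\Z/p)^\times \bmod (\text{index-}l\text{ subgroup})$, i.e.\ over the order-$l$ characters mod $p$. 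Here one uses orthogonality: $\sum_{\psi_p}\psi_p(n)$ equals $l-1$ when $p\mid n$ or $n$ is an $l$-th power mod $p$, and equals $-1$ otherwise. Since $n$ is assumed \emph{not} an $l$-th power, this "twisting by $\chi(n)$" kills the main pole: the local factor $1 + (l-1)/p^s$ of the untwisted series (Lemma \ref{lem1}) is replaced by $1 + c_p(n)/p^s$ with $c_p(n)\in\{-1, l-1\}$ and $c_p(n) = -1$ for a positive proportion of $p \equiv 1\,(l)$ — in fact for all $p$ outside the condition "$n$ is an $l$-th power residue," which by Chebotarev is density $1 - 1/l$ among $p\equiv 1\,(l)$. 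The cleanest way to see the analytic upshot is to factor $D_n(s)$ against Hecke $L$-functions over $K = \Q(\zeta_l)$: $\sum_{\psi_p}\psi_p(n)\cdot p^{-s}$ matches, up to bounded Euler factors, $\sum_{\mathfrak{p}\mid p}(\text{something})$, and one arranges
\[
D_n(s) = \left(\prod_{j=1}^{l-1} L(s, \xi^j)^{a_j}\right)\cdot G_n(s),
\]
where the $\xi^j$ are the order-$l$ Hecke characters over $K$ attached to "$n$ is a $j$-th power," and $G_n(s)$ is an Euler product absolutely convergent (uniformly in $n$) for $\Re(s) > 1/2$, with $G_n(s) \ll_\epsilon n^\epsilon$ there because the local factors are $1 + O(p^{-2\Re(s)})$ and the finitely many bad primes dividing $n$ contribute at most $n^\epsilon$. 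Crucially, since $n$ is not an $l$-th power the trivial character does not occur among the $\xi^j$ that appear with positive exponent, so \emph{all} the $L(s,\xi^j)$ are entire.

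Then I would shift the contour in the integral from $\Re(s) = 1+\epsilon$ to $\Re(s) = 1/2 + \epsilon$. On the new line there is no pole (every $L(s,\xi^j)$ is entire and $G_n$ is holomorphic), and under GRH for these Hecke $L$-functions the standard convexity-in-$t$ bound on the critical line combined with $G_n(s)\ll n^\epsilon$ gives $D_n(1/2+\epsilon+it) \ll n^\epsilon (1+|t|)^{A}$ for some fixed $A$; the rapid decay of $\widetilde{\omega}$ absorbs the $|t|^A$. This yields $\sum_\chi^* \omega(q_\chi/X)\chi(n) \ll_{\omega,\epsilon} n^\epsilon X^{1/2+\epsilon}$, as claimed. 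The main obstacle is bookkeeping in the middle step: pinning down exactly which Hecke characters $\xi^j$ appear and with what exponents, and verifying that the leftover Euler product $G_n(s)$ is genuinely absolutely convergent past $\Re(s) = 1/2$ with a clean $n^\epsilon$ bound on the ramified-at-$n$ primes — this is where one must be careful that the hypothesis "$n$ not an $l$-th power" is used correctly (as opposed to "$n$ not an $l$-th power mod $p$," which fails for individual $p$). Everything else is the routine Mellin-transform-plus-contour-shift argument, and the analytic input is exactly the GRH hypotheses already listed in the statement of Theorem \ref{main}.
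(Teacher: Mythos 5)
Your plan is essentially the paper's own route: the paper simply defers to \cite[Lemma 3.9]{CP2019}, whose argument is exactly this Mellin-transform/contour-shift scheme in which the order-$l$ character sum is factored through Hecke $L$-functions attached to $l$-th power residue symbols over $\Q(\zeta_l)$ (nontrivial, hence entire, precisely because $n$ is not an $l$-th power) and bounded on $\Re(s)=\tfrac12+\epsilon$ under GRH --- which is why the paper's GRH footnote lists Hecke $L$-functions over $\Q(\zeta_l)$ with characters of order $l$. Two small points to correct in the write-up: when $p\mid n$ the local sum $\sum_{\psi_p}\psi_p(n)$ is $0$, not $l-1$ (harmless, since such primes go into the factor $G_n(s)\ll n^\epsilon$), and on the shifted line you need the GRH/Lindel\"of-type bound in the conductor aspect, not convexity, to obtain the $n^\epsilon$ uniformity.
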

\begin{proof}
The proof is essentially the same with that of \cite[Lemma 3.9]{CP2019}.
\end{proof}

\section{Proof of Theorem \ref{main}} \label{cyclic-l}

Let $f$ be the modular form of weight $2$ with level $Q_E$ which corresponds to the elliptic curve $E$ and $\chi$ a primitive Dirichlet character of order $l$ with conductor $q_\chi$ coprime to $Q_E$.  Then, the conductor $q(f \times \chi)$ of $L(s, f \times \chi)(=L(s, E\times \chi))$ is exactly $Qq_\chi^2$ by a work of Barthel and Ramakrishnan \cite{BR}. 

The one-level density for an $L$-function $L(s, f \times \chi)$ is defined to be 
\begin{align*}
D_X(f \times \chi, \phi) &= \sum_{\gamma_{f \times \chi}} \phi \left( \gamma_{f\times \chi} \frac{\log L}{2\pi} \right),
\end{align*}
where $\gamma_{f \times \chi}$ denote the imaginary part of a generic non-trivial zero and $L=X^2$ for a parameter $X$. \footnote{In place of $L$ in the one-level density, there should be a parameter which is of the same order of magnitude as the analytic conductor of $L(s, f\times \chi)$. In our case, the conductor $q(f\times \chi)$ of $L(s, f \times \chi)$ is of the same order of magnitude as the corresponding analytic conductor and $L=X^2$ is of the same order of magnitude as the conductor $q(f \times \chi)$.} 
Let $\phi$ be an even 
Schwartz class function such that its Fourier transform $\widehat{\phi}$ is compactly supported. Then, by Weil's explicit formula $(\ref{EF})$, we have
\begin{align*}
D_X(f \times \chi, \phi) &= \widehat{\phi}(0)\frac{\log c_{f \times \chi} }{\log L} -\frac{1}{\log L}\sum_{p} \frac{\log p}{p^{1/2}}\left( a_{f \times \chi}(p)\widehat{\phi}\left( \frac{\log p}{\log L} \right) + a_{f \times \overline{\chi}}(p)\widehat{\phi}\left( \frac{\log p}{\log L} \right)\right)\\
&  -\frac{1}{\log L}\sum_{p} \frac{\log p}{p}\left( a_{f \times \chi}(p^2)\widehat{\phi}\left( \frac{2\log p}{\log L} \right) + a_{f \times \overline{\chi}}(p^2)\widehat{\phi}\left( \frac{2\log p}{\log L} \right)\right) + O\left( \frac{1}{\log L}\right)\\
&= \widehat{\phi}(0)\frac{2 \log q_\chi }{\log L} -\frac{1}{\log L}\sum_{p} \frac{\log p}{p^{1/2}}\left( a_{f \times \chi}(p)\widehat{\phi}\left( \frac{\log p}{\log L} \right) + a_{f \times \overline{\chi}}(p)\widehat{\phi}\left( \frac{\log p}{\log L} \right)\right)\\
&  -\frac{1}{\log L}\sum_{p} \frac{\log p}{p}\left( a_{f \times \chi}(p^2)\widehat{\phi}\left( \frac{2\log p}{\log L} \right) + a_{f \times \overline{\chi}}(p^2)\widehat{\phi}\left( \frac{2\log p}{\log L} \right)\right) + O\left( \frac{1}{\log L}\right).
\end{align*}
Note that $a_{f \times \chi }(n) =a_f(n) \times \chi(n)$. 

Hence, we have
\begin{align*}
\frac{1}{W_{Q_E}(X)}\sum_{\chi}^*D_X(f \times \chi, \phi)\omega\left( \frac{q_\chi}{X}\right) & = \frac{\widehat{\phi}(0)}{W_{Q_E}(X)} \sum_{\chi}^* \frac{2 \omega(q_\chi/X) \log q_\chi}{\log L} +S_1+S_2+ O\left( \frac{1}{\log L}\right), 
\end{align*}
where
\begin{align*}
&S_1 =-\frac{2}{W_{Q_E}(X)\log L}\sum_{p}\frac{\log p}{p^{1/2}}a_f(p)\widehat{\phi}\left( \frac{\log p}{L} \right)\left(\Re\sum_\chi^* \chi(p)\omega\left( \frac{q_\chi}{X}\right)\right),\\
&S_2 =-\frac{2}{W_{Q_E}(X)\log L}\sum_{p}\frac{\log p}{p}a_f(p^2)\widehat{\phi}\left( \frac{2\log p}{L} \right)\left(\Re\sum_\chi^* \chi(p^2)\omega\left( \frac{q_\chi}{X}\right)\right).
\end{align*}

By Lemma \ref{con-sum}, the first sum is $\widehat{\phi}(0)+ O\left( \frac{1}{\log L} \right)$. 
Now we assume that  $\widehat{\phi}$ is supported in $(-1/2,1/2)$. Then, by Lemma \ref{lem2},
\begin{align*}
&S_1 \ll \frac{1}{X \log X} \sum_{ p < X^{1-2\epsilon}}\frac{p^{\epsilon}\log p}{p^{1/2}}X^{1/2+\epsilon} \ll \frac{1}{\log X},\\
&S_2 \ll \frac{1}{X \log X}  \sum_{ p < X^{1/2-\epsilon}}\frac{p^{\epsilon}\log p}{p}X^{1/2+\epsilon} \ll \frac{1}{\log X}.
\end{align*}

\begin{thm} \label{one-level} Let $\phi$ be an even Schwartz class function such that  its Fourier  $\widehat{\phi}$ is supported in $(-1/2,1/2)$. Then,
\begin{align*}
\lim_{X \rightarrow \infty} \frac{1}{W_{Q_E}(X)}\sum_{\chi}^*  \omega \left( \frac{q_\chi}{X}\right) D_X(f \times \chi, \phi)  =  \widehat{\phi}(0).
\end{align*}
\end{thm}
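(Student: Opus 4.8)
The plan is to assemble the ingredients already prepared above. Apply the explicit formula $(\ref{EF})$ to each $L(s,f\times\chi)$ with the scaling parameter $L=X^{2}$, which matches the analytic conductor $q(f\times\chi)=Q_E q_\chi^{2}$ of Barthel--Ramakrishnan up to bounded factors; using $a_{f\times\chi}(n)=a_f(n)\chi(n)$ this writes $D_X(f\times\chi,\phi)$ as the sum of a conductor term $\widehat{\phi}(0)\,\tfrac{2\log q_\chi}{\log L}$, two prime sums (one over $p$, one over $p^{2}$), and the admissible error $O(1/\log L)$, exactly as displayed just before the statement. Averaging this against the weight $\omega(q_\chi/X)$ over the primitive characters $\chi$ of order $l$ with $(q_\chi,Q_E)=1$ and dividing by $W_{Q_E}(X)$ yields the averaged conductor term together with $S_1$ and $S_2$; it then remains to show that the first converges to $\widehat{\phi}(0)$ and the other two to $0$.

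For the averaged conductor term I would invoke Lemma \ref{con-sum}: since $\log L=2\log X$ and $W_{Q_E}(X)\gg_{\omega,l,Q_E}X$ by Lemma \ref{lem1}, it equals $\widehat{\phi}(0)\bigl(1+O(1/\log X)\bigr)$. For $S_1$ and $S_2$ I would use Lemma \ref{lem2}: because $l\ge 3$, neither $p$ nor $p^{2}$ is ever an $l$-th power (a prime is never an $l$-th power for $l\ge 2$, and $p^{2}$ is one only if $l\mid 2$), so $\sum_{\chi}^{*}\omega(q_\chi/X)\chi(p)\ll_{\omega,\epsilon}p^{\epsilon}X^{1/2+\epsilon}$, and likewise with $\chi(p^{2})$ in place of $\chi(p)$. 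Since $\widehat{\phi}$ is compactly supported inside $(-1/2,1/2)$, there is a $\sigma<1/2$ with $\operatorname{supp}\widehat{\phi}\subseteq[-\sigma,\sigma]$, so the prime sum in $S_1$ is confined to $p\le X^{2\sigma}$ and that in $S_2$ to $p\le X^{\sigma}$. Inserting the Ramanujan bound $|a_f(p^{k})|\le 2$ and the character-sum estimate, and dividing by $W_{Q_E}(X)\log L\asymp X\log X$, gives
\begin{align*}
S_1\ll\frac{X^{1/2+\epsilon}}{X\log X}\sum_{p\le X^{2\sigma}}\frac{\log p}{p^{1/2-\epsilon}}\ll\frac{X^{\sigma-\frac12+\epsilon(1+2\sigma)}}{\log X},\qquad
S_2\ll\frac{X^{1/2+\epsilon}}{X\log X}\sum_{p\le X^{\sigma}}\frac{\log p}{p^{1-\epsilon}}\ll\frac{X^{-\frac12+\epsilon(1+\sigma)}}{\log X},
\end{align*}
and choosing $\epsilon<\tfrac{1/2-\sigma}{1+2\sigma}$, which is possible exactly because $\sigma<\tfrac12$, makes both exponents negative, so $S_1,S_2\to 0$.

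Combining the three pieces, the weighted average equals $\widehat{\phi}(0)+O(1/\log X)+S_1+S_2\to\widehat{\phi}(0)$ as $X\to\infty$, which is the assertion. The only genuinely delicate step is the bound on $S_1$: the GRH square-root cancellation of Lemma \ref{lem2} contributes a factor $X^{1/2+\epsilon}$, the truncated prime sum over $p\le X^{2\sigma}$ contributes roughly $X^{\sigma+2\sigma\epsilon}$, and their product must beat the normalizing size $W_{Q_E}(X)\asymp X$ — which succeeds precisely when $2\sigma<1$, i.e. when $\widehat{\phi}$ is supported strictly inside $(-1/2,1/2)$. Everything else is the bookkeeping already carried out above.
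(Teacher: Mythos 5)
Your proposal is correct and follows essentially the same route as the paper: explicit formula with $L=X^2$ and the Barthel--Ramakrishnan conductor $Q_Eq_\chi^2$, Lemma \ref{con-sum} (with Lemma \ref{lem1}) for the conductor term, and Lemma \ref{lem2} with the support restriction $\operatorname{supp}\widehat{\phi}\subset(-1/2,1/2)$ to kill $S_1$ and $S_2$. Your explicit observation that $p$ and $p^2$ are never $l$-th powers for $l\geq 3$ is exactly the (implicit) justification the paper relies on when invoking Lemma \ref{lem2}.
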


Let  $r_{E,\chi}$ denote the analytic rank of $L(s, f\times \chi)$. If $\phi$ is a non-negative valued function with $\phi(0)>0$,  by a trivial bound
\begin{align*}
r_{E,\chi}\phi(0) \leq  \sum_{\gamma_{f \times \chi}} \phi \left( \gamma_{f\times \chi} \frac{L}{2\pi} \right), 
\end{align*}
we have
\begin{align*}
 \frac{\phi(0)}{W_{Q_E}(X)}\sum_{\chi}^*  r_{E,\chi}\omega \left( \frac{q_\chi}{X} \right) \leq  \frac{1}{W_{Q_E}(X)}\sum_{\chi}^* D_X(f \times \chi, \phi) \omega\left( \frac{q_\chi}{X}\right),
\end{align*}
and it implies
\begin{align*}
\lim_{X \rightarrow \infty} \frac{1}{W_{Q_E}(X)}\sum_{\chi}^*  r_{E,\chi}\omega \left( \frac{q_\chi}{X} \right)
=\lim_{X \rightarrow \infty} \frac{\sum_{F \in C_{l,Q_E}(X)}(r_F(E)-r_\Q(E))\omega \left( \frac{q_F}{X} \right)}{ \frak{C}_{l,Q_E}(X)} \leq \frac{\widehat{\phi}(0)}{\phi(0)}.
\end{align*}

In particular, we take $\phi(x)=\frac{\sin^2 \left( 2\pi \frac{1}{2} \sigma x \right)}{(2\pi x)^2}$. Then, 
\begin{align*}
\widehat{\phi}(u)=\frac{1}{2}\left( \frac{1}{2} \sigma - \frac{1}{2} |u| \right) \mbox{ for $|u|\leq \sigma$, $\phi(0)=\frac{\sigma^2}{4}$
, and $\widehat{\phi}(0)=\frac{\sigma}{4}$.}
\end{align*}
By choosing $\sigma=1/2$, Theorem \ref{main} follows. 

\section{$S_d$-fields} \label{S_d}
%In \cite{CK2015, CKimrn}, the author and Kim computed the one-level and $n$-level densities for families of $L$-functions $L(s,\pi \times \rho)$ for a cuspidal representation $\pi$ of $GL_M(A_\Q)$. For automorphic $L$-functions twisted by quadratic characters, the $n$-level density is verified by Rubinstein \cite{Ru}.  
%
%It is worth to mention that in  \cite{CKimrn, Ru}, they computed  the $n$-level density not for the whole family of $L$-functions $L(s, E \times \chi)$ but 
%for  the subfamily with  root number  $1$ and  the subfamily with root number $-1$  separately. The average of these two $n$-level densities agrees with the symmetry type $O$. 

The main tool for the one-level density of the family of elliptic curve $L$-functions over $S_d$-fields is counting number fields with a finite number of local conditions. First, we introduce some notation and known results. 
Let $C$ denote a conjugacy class of the group $S_d$, and $r_1,r_2,\cdots,r_w$ be the possible splitting types of a prime in a $S_d$-field. We say that an $S_d$-field $F$ satisfies the local condition $\it S_{p,C}$ if $p$ is unramified in $F$ and the conjugacy class of Frobenius automorphism at $p$ is $C$. An $S_d$-field $F$ is said to satisfy the local condition $\it S_{p,r_i}$ if $p$ is ramified in $F$ and its splitting type is $r_i$. 

Let $\mathcal S=(\it{LC}_{p_i})_{i=1}^k$ be a finite set of local conditions. Define the density  of the set $\mathcal S$ by
\begin{align*}
|\it S_{p,C}|=\frac{|C|}{|S_d|(1+f(p))}, \quad |\it S_{p,r_i}|=\frac{c_i(p)}{1+f(p)}, \quad |S|=\prod_{i=1}^k |\it{LC}_{p_i}|
\end{align*}
for some positive-valued functions $f(p)$ and $c_i(p)$ on the set of primes with $\sum_{i}c_i(p)=f(p)$. Note that the functions $f(p)$ and $c_i(p)$ depend on the group $S_d$. For $S_3$-fields, there are two splitting types for a ramified prime in a $S_d$-field, which are partial ramification and total ramification and we denote them by $r_{1}$ and $r_{2}$ respectively. Then $c_{1}(p)=\frac{1}{p}$, $c_{2}(p)=\frac{1}{p^2}$, and $f(p)=\frac{1}{p}+\frac{1}{p^2}$. 

Let
\begin{align*}
S_d(X,\mathcal S)=\{ F \in S_d | |D_F| \leq X, \text{$F$ satisfies $\mathcal S$} \}. 
\end{align*} 
For $d=3,4,$ and $5$, the cardinality of $S_d(X,\mathcal S)$ can be estimated with a power saving error term \cite{BBP, TT, ST, Y, CKcnt}:
\begin{align} \label{count}
\left| S_d(X,\it S) \right| = c_{d}|\mathcal S|X + O\left((\prod_{i=1}^k p_i)^{\alpha_d}) X^{1-\delta_d}\right),
\end{align}
for some positive constants $c_d$, $\alpha_d$ and $0<\delta_d<1$ which depends on $S_d$.

Then, we can compute the cardinality of $S_{d,Q_E}(X)$ by forcing all the prime divisors $p$ of $Q_E$ not to ramify.  From now on, we focus on the case $d=3$. For $S_3$-fields, due to a recent work of Bhargava, Taniguchi and Thorne \cite{BTT}, we have
\begin{align} \label{s3-count}
\left| S_3(X,\mathcal S) \right| = c_{3}|S|X + O\left(X^\frac{5}{6}+ (\prod_{i=1}^k p_i)^\frac{2}{3} X^{\frac{2}{3}+\epsilon}\right).
\end{align}
By $(\ref{s3-count})$, we have
\begin{align}\label{F-size}
|S_{3,Q_E}| = c_{3,Q_E}X + O_{Q_E}\left(X^\frac{5}{6}+ X^{\frac 23 + \epsilon} \right)
\end{align}
where $c_{3,Q_E}=\left( \prod_{q | Q_E}\frac{1}{1+f(p)} \right) c_3$. 

We define the one-level density for $L(s, f \times \rho)$ by  
\begin{align*}
D_X(f \times \rho, \phi) = \sum_{\gamma_{f \times \rho}} \phi \left( \gamma_{f\times \rho} \frac{\log L}{2\pi} \right)
\end{align*}
where $\gamma_{f \times \chi}$ denote the imaginary part of a generic non-trivial zero and $L=X^2$. 

Once we show that for $supp(\widehat{\phi})\subset [-\sigma, \sigma]$ with $\sigma< \frac{1}{7}$ 
\begin{align}\label{one-level-S-d}
\frac{1}{|S_{d,Q_E}(X)|}\sum_{F \in S_{d,Q_E}(X)}D_X(f \times \rho, \phi)=\widehat{\phi}(0)+\frac{\phi(0)}{2}+ O\left( \frac{1}{\log X} \right),
\end{align}
we have 
\begin{align*}
\frac{\phi(0)}{|S_{d,Q_E}(X)|}\sum_{F \in S_{d,Q_E}(X)}r_{ E \times \rho}=\frac{\phi(0)}{|S_{d,Q_E}(X)|}\sum_{F \in S_{d,Q_E}(X)}{(r_F(E)- r_\Q(E))} \leq  \widehat{\phi}(0)+\frac{\phi(0)}{2}+ O\left( \frac{1}{\log X} \right).
\end{align*}

By taking 
\begin{align*}
\phi(x)=\frac{\sin^2 \left( 2\pi \frac{1}{2} \sigma x \right)}{(2\pi x)^2}, \,\widehat{\phi}(u)=\frac{1}{2}\left( \frac{1}{2} \sigma - \frac{1}{2} |u| \right) \mbox{ for $|u|\leq \sigma$}
\end{align*}
with $\sigma=\frac{1}{7}$, Theorem \ref{thm2} follows. 

\begin{rem}
The one-level density in $(\ref{one-level-S-d})$ is different from that of Theorem \ref{one-level}, which means that the symmetry types for the two families are different. The symmetry type of the former one is $U$ and the symmetry type of the latter one is $O$.
\end{rem}

Now, it is left to show $(\ref{one-level-S-d})$. 
Since the conductor $q(f \times \rho)$ of $L(s, f \times \rho)$ is $|D_F|^2Q_E$ \cite{BR}, by the Explicit formula $(\ref{EF})$, we have
 \begin{align*}
D_X(f \times \rho, \phi) 
&= \widehat{\phi}(0)\frac{2 \log |D_F| }{\log L} -\frac{2}{\log L}\sum_{p} \frac{\log p}{p^{1/2}} a_{f \times \rho}(p)\widehat{\phi}\left( \frac{\log p}{\log L} \right) \\
&  -\frac{2}{\log L}\sum_{p} \frac{\log p}{p} a_{f \times \rho}(p^2)\widehat{\phi}\left( \frac{2\log p}{\log L} \right)+ O\left( \frac{1}{\log L}\right).
\end{align*}

Since $$a_{f \times \rho}(p)=a_{f}(p)a_\rho(p), \quad a_{f \times \rho}(p^2)=a_{f}(p^2)a_\rho(p^2),$$
we have
\begin{align} \label{S_d one-level}
\frac{1}{|S_{d,Q_E}(X)|}\sum_{F \in S_{d,Q_E}(X)}D_X(f \times \rho, \phi)&=\frac{2 \widehat{\phi}(0)}{|S_{d,Q_E}(X)|\log L}\sum_{F \in S_{d,Q_E}(X)} \log |D_F|\\
&+S_1+S_2+O\left( \frac{1}{\log L} \right),
\end{align}
where 
\begin{align*}
&S_1=-\frac{2}{|S_{d,Q_E}(X)|\log L}\sum_{p} \frac{\log p}{p^{1/2}} a_{f}(p)\widehat{\phi}\left( \frac{\log p}{\log L} \right)\left( \sum_{F \in S_{d,Q_E}(X)} a_\rho(p) \right),\\
&S_2=-\frac{2}{|S_{d,Q_E}(X)|\log L}\sum_{p} \frac{\log p}{p} a_{f}(p^2)\widehat{\phi}\left( \frac{2 \log p}{\log L} \right)\left( \sum_{F \in S_{d,Q_E}(X)} a_\rho(p^2) \right).
\end{align*}

We can determine $a_\rho(p)$ and $a_\rho(p^2)$ by the corresponding conjugacy class $C$ and it is summarized in the table below. 
\begin{center}
\begin{tabularx}{0.8\textwidth} { 
  | >{\centering\arraybackslash}X 
  | >{\centering\arraybackslash}X 
  | >{\centering\arraybackslash}X | }
 \hline
 Conjugacy class & $a_\rho(p)$ & $a_\rho(p^2)$ \\ 
\hline
 (1)& 2 & 2 \\ 
 (12) & 0 & 2 \\ 
(123) &  -1 & -1 \\
 \hline
\end{tabularx}
\end{center}

Note that for $a_f(p)=\alpha(p)+\overline{\alpha(p)}=\alpha+\overline{\alpha}$, we have the following relations:
\begin{align*}
&a_{f}(p^2)=\alpha^2+\overline{\alpha}^2=\alpha^2+1+\overline{\alpha}^2-1=a_{Sym^2 f}(p) - a_{\Lambda^2 f}(p)\\
&a_{f \times f}(p)=a_f(p)^2=\alpha^2+2+\overline{\alpha}^2=\alpha^2+1+\overline{\alpha}^2+1=a_{Sym^2f}(p)+a_{\Lambda^2f}(p).
\end{align*}
Since $f$ is self-dual, $L(s, f \times f)$ has a simple pole at $s=1$ and $L(s, f, \Lambda^2)=\prod_{q|Q_E}\left(1-\frac{1}{p^s} \right)\zeta(s)$ also has a simple pole at $s=1$, from the relations above, $L(s, Sym^2f)$ is entire. Hence, under GRH we have
\begin{align}\label{theta-sum}
\theta_f(x)=\sum_{n \leq x}a_f(p^2)\log p = - x+ O\left( x^{\frac 12}(\log x)(\log(x^3 c_f)) \right)
\end{align} 
for some constant $c_f>0$ \cite[Theorem 5.15]{IK}. 

By partial summation, we have
\begin{lem} \label{first est}
\begin{align*}
\sum_{F \in S_{d,Q_E}(X)} \log |D_F| = |S_{d,Q_E}(X)|\log X + O_{Q_E}(X).
\end{align*}
\end{lem}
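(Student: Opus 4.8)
The plan is to apply Abel (partial) summation to the counting function against $\log t$, and then feed in the power-saving count (\ref{F-size}). Write $N(t):=|S_{d,Q_E}(t)|$ for the number of fields $F\in S_{d,Q_E}$ with $|D_F|\le t$, and let $t_0$ be the least value of $|D_F|$ over $F\in S_{d,Q_E}$ (a constant depending only on $d$ and $Q_E$). First I would express
$$
\sum_{F\in S_{d,Q_E}(X)}\log|D_F|=\int_{t_0^-}^{X}\log t\,dN(t)=N(X)\log X-\int_{t_0}^{X}\frac{N(t)}{t}\,dt,
$$
the boundary term at the lower endpoint contributing only $O_{Q_E}(1)$ since $N(t_0^-)=0$.

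Next I would insert the asymptotic $N(t)=c_{3,Q_E}\,t+O_{Q_E}\!\big(t^{5/6}+t^{2/3+\epsilon}\big)$ from (\ref{F-size}) (and, for $d=4,5$, the corresponding estimate (\ref{count}) with exponent $1-\delta_d$). Then
$$
\int_{t_0}^{X}\frac{N(t)}{t}\,dt=c_{3,Q_E}\,(X-t_0)+O_{Q_E}\!\left(\int_{t_0}^{X}t^{-1/6}\,dt\right)=c_{3,Q_E}\,X+O_{Q_E}\!\big(X^{5/6}\big).
$$
Substituting this back gives
$$
\sum_{F\in S_{d,Q_E}(X)}\log|D_F|=N(X)\log X-c_{3,Q_E}\,X+O_{Q_E}\!\big(X^{5/6}\big)=|S_{d,Q_E}(X)|\log X+O_{Q_E}(X),
$$
where in the last step the term $c_{3,Q_E}X$ and the error $X^{5/6}$ are both absorbed into $O_{Q_E}(X)$. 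This is exactly the claimed identity.

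There is essentially no serious obstacle here; the computation is routine once the power-saving count is available. The only points requiring a little care are the Riemann--Stieltjes bookkeeping at the lower endpoint (the measure $dN$ is a sum of point masses, so one integrates from $t_0^-$ and checks the boundary term is bounded) and the observation that the quality of the final error, $O_{Q_E}(X)$ — a factor $\log X$ below the main term $|S_{d,Q_E}(X)|\log X\asymp_{Q_E}X\log X$ — relies only on having \emph{some} exponent $\delta_d>0$ of power saving in (\ref{count}); in particular the precise shape $X^{5/6}+X^{2/3+\epsilon}$ of (\ref{s3-count})–(\ref{F-size}) is more than enough.
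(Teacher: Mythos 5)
Your argument is correct and is essentially the paper's own proof: the paper simply says ``by partial summation,'' meaning exactly the Abel/Riemann--Stieltjes computation you carry out with the counting function $N(t)=|S_{d,Q_E}(t)|$ and the asymptotic $(\ref{F-size})$. Your closing observation is also right (indeed even the crude bound $N(t)\ll_{Q_E} t$ already makes $\int_{t_0}^{X} N(t)\,t^{-1}\,dt \ll_{Q_E} X$, so no power saving is actually needed for the stated $O_{Q_E}(X)$ error).
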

By Lemma \ref{first est}, we can estimate the first sum in $(\ref{S_d one-level})$:
\begin{align*}
\frac{2 \widehat{\phi}(0)}{|S_{d,Q_E}(X)|\log L}\sum_{F \in S_{d,Q_E}(X)} \log |D_F|=\widehat{\phi}(0)+ O\left( \frac{1}{\log X} \right).
\end{align*}

To control the sum $S_1$, we need the following lemma. 

\begin{lem} \label{s1-sum}
\begin{align}
\sum_{F \in S_{d,Q_E}(X)} a_\rho(p)=O_{Q_E}\left(X^{\frac 56 } +p^{\frac{2}{3}} X^{ \frac 23 + \epsilon} \right).
\end{align}
\end{lem}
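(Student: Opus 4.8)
The plan is to exploit that $a_\rho(p)$ depends on the field $F$ only through the way $p$ decomposes in $F$: when $p$ is unramified it is the character value $\chi_\rho(\mathrm{Frob}_p)$, which takes the values $2,0,-1$ on the classes $(1),(12),(123)$ recorded in the table, and when $p$ ramifies it is determined by the splitting type (in particular it vanishes at totally ramified $p$, where the inertia group $\langle(123)\rangle$ has no nonzero invariants in the standard representation). Hence, for a fixed prime $p$,
\begin{align*}
\sum_{F \in S_{3,Q_E}(X)} a_\rho(p) = \sum_{C} \chi_\rho(C)\, N_{p,C}(X) \;+\; \sum_{i} a_\rho(r_i)\, N_{p,r_i}(X),
\end{align*}
where $C$ runs over the conjugacy classes of $S_3$, $r_i$ over the (at most two) ramification types, and $N_{p,C}(X)$, $N_{p,r_i}(X)$ count the fields $F\in S_{3,Q_E}(X)$ with the prescribed local behaviour at $p$. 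Each is a cubic field counting function carrying one local condition at $p$ together with the conditions ``$p'$ unramified'' for $p'\mid Q_E$ that cut out $S_{3,Q_E}$ (when $p\mid Q_E$, only the classes $C$ occur).

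Next I would feed each $N_{p,\bullet}(X)$ into the Bhargava--Taniguchi--Thorne estimate $(\ref{s3-count})$, expanding each condition ``$p'$ unramified'' into the $O_{Q_E}(1)$ unramified Frobenius classes at $p'$; this yields $N_{p,\bullet}(X) = c_{3,Q_E}\,|S_{p,\bullet}|\,X + O_{Q_E}\!\big(X^{5/6} + p^{2/3}X^{2/3+\epsilon}\big)$, since the product of ramified primes entering the error of $(\ref{s3-count})$ is $p$ times a divisor of $Q_E$, whose contribution is absorbed into the implied constant. The decisive step is that the main terms cancel: their total coefficient is $\sum_{C}\chi_\rho(C)|S_{p,C}| + \sum_i a_\rho(r_i)|S_{p,r_i}|$, and by the definitions of the densities the first sum equals $\tfrac{1}{|S_3|(1+f(p))}\sum_{g\in S_3}\chi_\rho(g) = \tfrac{1}{|S_3|(1+f(p))}\,|S_3|\,\langle\chi_\rho,\mathbf 1\rangle = 0$ because $\rho$ is the nontrivial irreducible standard representation (equivalently $\chi_\rho=\chi_{\mathrm{perm}}-\mathbf 1$ and the average number of fixed points of the transitive $S_3$-action is $1$). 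The totally ramified term vanishes as noted, and the partially ramified term is controlled by the same count applied to the ramification-type conditions, whose density at $p$ is $\ll 1/p$, so it is of lower order and harmless for the subsequent estimation of $S_1$. Summing the $O_{Q_E}(X^{5/6}+p^{2/3}X^{2/3+\epsilon})$ errors over the $O(1)$ classes then gives the stated bound.

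The hard part is concentrated entirely in the input $(\ref{s3-count})$: what is needed is not merely a power-saving count of cubic fields but one whose error term is uniform in the local conditions with only a small polynomial loss in the involved primes, because in $S_1$ this inner sum is weighted by $p^{-1/2}\log p$ and summed over $p$ up to $X^{2\sigma}$. It is precisely the exponents $\tfrac56$ and the $p^{2/3}$ of Bhargava--Taniguchi--Thorne that make the resulting estimate of $S_1$ go through for $\sigma<\tfrac17$; with the older Davenport--Heilbronn/Datskovsky--Wright error term the admissible range of $\sigma$ would be strictly smaller. Everything else --- passing from the explicit formula to $S_1$, partial summation, and the class-function decomposition --- is routine.
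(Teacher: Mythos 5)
Your proof follows essentially the same route as the paper: split the sum over $F$ according to the splitting type at $p$, apply the Bhargava--Taniguchi--Thorne count $(\ref{s3-count})$ to each local condition (together with the unramified conditions at the primes dividing $Q_E$, absorbed into the implied constant), and observe that the unramified main terms cancel because $2\cdot 1+0\cdot 3+(-1)\cdot 2=\sum_{g\in S_3}\chi_\rho(g)=0$, leaving only the $O_{Q_E}\left(X^{\frac 56}+p^{\frac 23}X^{\frac 23+\epsilon}\right)$ errors. Your handling of the ramified types is in fact more careful than the paper's proof, which ignores them entirely: partial ramification gives $a_\rho(p)=1$ with density $\asymp 1/p$, so this contributes $\ll X/p$, which is not of lower order than the stated bound when $p$ is small and should really appear as an extra $O(X/p)$ term (as in Lemma \ref{s2-sum}); as you note, however, it is harmless for the estimation of $S_1$, since the weight $p^{-1/2}\log p$ makes $\sum_p p^{-3/2}\log p$ converge.
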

\begin{proof}
By $(\ref{s3-count})$ and the table , 
\begin{align*}
\sum_{F \in S_{3,Q_E}(X)} a_\rho(p)&= c_3X \prod_{q | Q_E}\frac{1}{1+f(q)}    \cdot \left[\frac{1\times 2 + 0 \times 3 +(-1)\times 2}{|S_3|(1+f(p))} \right]   + O_{Q_E}\left(X^{\frac 56 } + p^{\frac{16}{9}} X^{ \frac 79 + \epsilon} \right),\\
&=O_{Q_E}\left(X^{\frac 56 } + p^{\frac{2}{3}} X^{ \frac 23 + \epsilon} \right).
\end{align*}
\end{proof}
Again by $(\ref{s3-count})$ and the table we can show that 
\begin{lem} \label{s2-sum}
\begin{align}
\sum_{F \in S_{3,Q_E}(X)} a_\rho(p^2)=c_{3,Q_E}X + O_{Q_E}\left( \frac 1p X + X^{\frac 56} + p^{\frac{2}{3}} X^{ \frac 23 + \epsilon}   \right). 
\end{align}
\end{lem}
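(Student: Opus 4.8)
The plan is to mirror the proof of Lemma \ref{s1-sum}, now reading off the $a_\rho(p^2)$ column of the table instead of the $a_\rho(p)$ column, and additionally keeping track of the contribution of the primes ramifying in $F$, since here (unlike in Lemma \ref{s1-sum}) the unramified main term does \emph{not} cancel.

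First I would partition $S_{3,Q_E}(X)$ according to the behaviour of $p$ in $F$: either $p$ is unramified with Frobenius conjugacy class one of $(1)$, $(12)$, $(123)$, or $p$ is partially ramified, or $p$ is totally ramified. Within each of these five pieces, the condition $(D_F,Q_E)=1$ is a union of $3^{\omega(Q_E)}=O_{Q_E}(1)$ further local conditions forcing each prime $q\mid Q_E$ to be unramified with a prescribed Frobenius class. Thus $\sum_{F\in S_{3,Q_E}(X)}a_\rho(p^2)$ is a sum of $O_{Q_E}(1)$ terms, each counting $S_3$-fields satisfying a finite set of local conditions supported on $\{p\}\cup\{q:q\mid Q_E\}$, weighted by a fixed value of $a_\rho(p^2)$. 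Applying $(\ref{s3-count})$ to each such term, the product of the distinguished primes is $p\prod_{q\mid Q_E}q\ll_{Q_E}p$, so each application contributes a main term $c_3|S|X$ and an error $O_{Q_E}(X^{5/6}+p^{2/3}X^{2/3+\epsilon})$; summing the $O_{Q_E}(1)$ errors keeps this shape.

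For the main term, the values $a_\rho(p^2)$ are $2,2,-1$ for the unramified classes $(1),(12),(123)$ (the table), while for ramified $p$ one computes from $L(s,\rho)=\zeta_F(s)/\zeta(s)$ — equivalently from the factorization of $p$ into prime ideals of $F$ — that $a_\rho(p^2)=1$ for partial ramification and $a_\rho(p^2)=0$ for total ramification; in particular $|a_\rho(p^2)|\le 2$ in all cases. The unramified contribution to the main term is therefore
\[
c_{3,Q_E}X\cdot\frac{2\cdot|C_{(1)}|+2\cdot|C_{(12)}|+(-1)\cdot|C_{(123)}|}{|S_3|(1+f(p))}=c_{3,Q_E}X\cdot\frac{2+6-2}{6(1+f(p))}=\frac{c_{3,Q_E}X}{1+f(p)}=c_{3,Q_E}X+O_{Q_E}\!\left(\frac{X}{p}\right),
\]
using $f(p)=\frac1p+\frac1{p^2}\ll\frac1p$. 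The ramified contribution is at most $2\cdot\#\{F\in S_{3,Q_E}(X):p\text{ ramifies}\}$, and by $(\ref{s3-count})$ (derived exactly as $(\ref{F-size})$ but with the extra local condition at $p$) this count is $c_{3,Q_E}X\cdot\frac{f(p)}{1+f(p)}+O_{Q_E}(X^{5/6}+p^{2/3}X^{2/3+\epsilon})=O_{Q_E}(X/p)+O_{Q_E}(X^{5/6}+p^{2/3}X^{2/3+\epsilon})$. Collecting the unramified main term, the ramified contribution, and all the error terms yields the claimed estimate.

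The computation is routine; the only points needing a little care are determining $a_\rho(p^2)$ at ramified primes, where the table does not directly apply, and observing that, in contrast to Lemma \ref{s1-sum}, the combination $\sum_C|C|\,a_\rho(p^2)$ over unramified classes equals $|S_3|$ rather than $0$, so the main term $c_{3,Q_E}X$ survives and the approximation $\frac1{1+f(p)}=1+O(1/p)$ together with the rarity of ramified fields produces precisely the $O(X/p)$ term in the statement.
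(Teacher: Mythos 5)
Your proposal is correct and follows essentially the same route the paper intends ("again by $(\ref{s3-count})$ and the table"): the unramified classes contribute $1\cdot 2+3\cdot 2+2\cdot(-1)=|S_3|$, giving the main term $c_{3,Q_E}X/(1+f(p))=c_{3,Q_E}X+O_{Q_E}(X/p)$, with the remaining terms absorbed into the stated error. Your extra care with the ramified primes, where $a_\rho(p^2)=1$ (partial) and $0$ (total) so their contribution is $O_{Q_E}(X/p)$ plus the counting error, correctly supplies the detail the paper leaves implicit.
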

%\begin{proof}
%By $(\ref{s3-count})$ and the table , 
%\begin{align*}
%\sum_{F \in S_{3,Q_E}(X)} a_\rho(p^2)&= c_3X \prod_{q | Q_E}\frac{1}{1+f(q)}    \cdot \left[\frac{2\times 2 + 2 \times 3 +(-1)\times 2}{|S_3|(1+f(p))} \right]   + O_{Q_E}\left(X^{\frac 56 } + p^{\frac 23} X^{ \frac 23 + \epsilon} \right),\\
%&=c_3X + O_{Q_E}\left( \frac 1p X + X^{\frac 56} + p^{\frac 23} X^{ \frac 23 + \epsilon}  \right).
%\end{align*}
%\end{proof}

Assume that support of $\widehat{\phi} \subset [-\sigma, \sigma]$ for some $\sigma < \frac{1}{7}$.  
By Lemma \ref{s1-sum} and $(\ref{F-size})$,
\begin{align}\label{S1 est}
S_1 \ll  \frac{X^{\frac 56}}{X \log X}\sum_{ p\leq X^{2\sigma}}\frac{\log p}{p^\frac{1}{2}} + \frac{X^{\frac 23 +\epsilon}}{X \log X}\sum_{ p\leq X^{2\sigma}} p^{\frac{2}{3}-\frac{1}{2}} \log p  \ll \frac{X^{\frac 56 + \sigma} }{X \log X} + \frac{X^{\frac 23 + \frac{7\sigma}{3} + \epsilon}}{X \log X} \ll \frac{1}{\log X}.
\end{align}

For $S_2$, we have
\begin{align*}
S_2&=-\frac{2}{|S_{d,Q_E}(X)|\log L}\sum_{p} \frac{\log p}{p} a_{f}(p^2)\widehat{\phi}\left( \frac{2 \log p}{\log L} \right)\left( c_{3,Q_E}X + O_{Q_E}\left( \frac 1p X + X^{\frac 56} + p^{\frac{2}{3}} X^{ \frac 23 + \epsilon}  \right) \right) \\
&=-\frac{c_{3,Q_E} X}{|S_{d,Q_E}(X)|}\sum_{p} \frac{2 a_{f}(p^2) \log p}{p \log L} \widehat{\phi}\left( \frac{2 \log p}{\log L} \right)+O_{Q_E}\left( \frac{1}{\log X} + \frac{X^{\frac 56} \log\log X }{X} + \frac{X^{\frac 23 + \frac{2\sigma}{3}+\epsilon}}{X \log X}  \right).
\end{align*}

By summation by parts we have
\begin{align*}
\sum_{p}\widehat{\phi}\left( \frac{2 \log p}{\log L} \right) \frac{ 2 a_f(p^2) \log p}{p\log L}=\int_1^\infty \widehat{\phi}\left( \frac{2\log t}{\log L} \right) \frac{2 d\theta_f(t)}{t \log L}.
\end{align*}
Using $(\ref{theta-sum})$, we can show that 
\begin{align} \label{S2 est}
S_2=-\frac{c_{3,Q_E} X}{|S_{d,Q_E}(X)|}\sum_{p}\widehat{\phi}\left( \frac{2 \log p}{\log L} \right) \frac{2a_f(p^2) \log p}{p\log L} +O_{Q_E}\left( \frac{1}{\log X} \right) =\frac{1}{2} \phi(0)+ O_{Q_E}\left( \frac{1}{\log X} \right).
\end{align}
By Lemma \ref{first est}, $(\ref{S1 est})$ and $(\ref{S2 est})$, we establish the one-level density $(\ref{one-level-S-d})$ for any $\sigma < \frac{1}{7}$. 

For $d=4$ and $d=5$, we choose $\sigma=1/864$ and $1/2400$ by a work of the author and Kim \cite{CKcnt} respectively. However, this gives a poor bound. 
 
\section{Acknowledgment}
The author appreciates the anonymous referee for his/her careful reading and helpful suggestions and thanks Myungjun Yu for providing the example of an elliptic curve in the introduction and Keunyoung Jeong for many useful discussion.

\end{document}